\newtheorem{thm}{Theorem}[section]
\newtheorem{cor}{Corollary}[section]
\newtheorem{rmk}{Remark} [section]
\newtheorem{definition}{Definition}[section]
\newtheorem{lemma}{Lemma}[section]
\newtheorem{ex}{Example}[section]
\title[Expansive measures for flows]{Expansive measures for flows}
\author{D. Carrasco-Olivera, C. A. Morales}
\address{Instituto de Matem\'atica, Universidade Federal do Rio de Janeiro, P. O.
Box 68530, 21945-970 Rio de Janeiro, Brazil.}
\email{morales@impa.br.}
\address{Departamento de Matem\' atica, Universidad del B\' {i}o, B\' {i}o Av. Collao \# 1202,
Casilla 5-C, VIII-Regi\' on, Concepci\' on, Chile.}
\email{dcarrasc@zeus.dci.ubiobio.cl}
\thanks{C-O was partially supported by FONDECYT project 11121598, CONICYT (Chile) and  Post-Doctorado
Verano 2013, IMPA, Rio de Janeiro, Brazil. CAM was partially supported by CNPq, CAPES and PRONEX from Brazil.}
\subjclass[2010]{54H20, 34C35}
\keywords{Expansive Measure, Expansive Flow, Metric Space}
\begin{document}

\begin{abstract}
We extend the concept of expansive measure \cite{am} defined for homeomorphism to flows.
We obtain some properties for such measures including
abscense of singularities in the support,
aperiodicity, expansivity with respect to time-$T$ maps, invariance under flow-equivalence, negligibleness of orbits,
characterization of expansive measures for expansive flows
and naturallity under suspensions.
As an application we obtain a new proof of the well known fact that
there are no continuous expansive flows on surfaces (e.g. \cite{hs}).
\end{abstract}

\maketitle

\section{Introduction}

\noindent
{\em Expansive systems} have been important object of study by many authors since its introduction by Utz in the middle of the nineteen century \cite{u}.
In 1972's Bowen and Walters \cite{bw} proposed an extension of expansivity
to flow and proved that certain properties in the discrete case hold true in the flow's context too.
A variant of this definition named {\em unstable flows} was considered soon after by
Norton and O'Brien \cite{no}. Lewowicz \cite{lc} and Ruggiero \cite{r} considered slightly more general definitions
whereas Komuro \cite{k} introduced a kind of expansivity allowing non-isolated singularities (see also \cite{a}).
On the other hand, an extension of such a concept to Borel measures
was proposed by the second author on metric and uniform spaces,
in collaboration with Arbieto \cite{am} and Sirvent \cite{ms} respectively.
More precisely, they defined {\em expansive measures} for discrete systems
on such spaces and, in the metric case, they were able to use such measures to study ergodic systems with positive entropy.

In this paper we shall consider an extension of the notion of expansive measures to flows,
much in the spirit of the pioneering work by Bowen and Walters \cite{bw}.
Indeed, we define expansive measure for flows and prove that some of the properties
obtained by the second author and Arbieto \cite{am}, \cite{m} can be extended to the flow's context.
Such properties include
abscense of singularities in the support of expansive measures,
aperiodicity, expansivity with respect to time-$T$ maps, invariance under flow-equivalence, negligibleness of orbits,
characterization of expansive measures for expansive flows
and naturallity under suspensions.
As an application we obtain a probabilistic proof of the well known fact that
there are no continuous expansive flows on surfaces (e.g. \cite{hs}).
Our concept also represents a generalization of the notion of {\em pairwise sensitivity} \cite{cj} defined for maps to flows.

\section{Statement of the results}

\noindent
Hereafter $X$ will denote a metric space.
The closed and open ball operations will be denoted by $B[x,\delta]$ and $B(x,\delta)$
respectively, $\forall (x,\delta)\in X\times \mathbb{R}$.

A {\em flow} of $X$ is a map $\phi: \mathbb{R}\times X\to X$ satisfying $\phi(t,\phi(s,x))=\phi(t+s,x)$ for all $s,t\in \mathbb{R}$ and $x\in X$.
Given $I\subset \mathbb{R}$ and $A\subset X$ we set $\phi_I(A)=\bigcup_{t\in I}\phi_t(A)$ and write $\phi_I(x)$ instead of $\phi_I(\{x\})$.
In particular, $\phi_{\mathbb{R}}(x)$ is called the orbit of $x\in X$ under $\phi$.
Denote by $\mathcal{B}$ the set of continuous maps $h:\mathbb{R}\to \mathbb{R}$ with $h(0)=0$.

In the sequel we state the classical definition of expansive flow due to Bowen and Walters \cite{bw}.

\begin{definition}
\label{ex}
We say that a flow $\phi$ on $X$ is {\em expansive} if for every $\epsilon>0$
there is $\delta>0$ such that if $x,y\in X$ satisfy $d(\phi_t(x),\phi_{h(t)}(y))\leq\delta$ for every $t\in \mathbb{R}$ and some $h\in \mathcal{B}$,
then $y\in \phi_{(-\epsilon,\epsilon)}(x)$.
\end{definition}

Notice that in this definition there exists an expansivity constant $\delta$ but depending on $\epsilon$.
Ruggiero \cite{r} dismissed such an $\epsilon$-dependence in order to introduce the following definition.

\begin{definition}
\label{r-ex}
We say that a flow $\phi$ on $X$ is {\em Ruggiero expansive} if
there is $\delta>0$ such that if $x,y\in X$ satisfy $d(\phi_t(x),\phi_{h(t)}(y))\leq\delta$ for every $t\in \mathbb{R}$ and some $h\in \mathcal{B}$,
then $y\in \phi_{\mathbb{R}}(x)$.
\end{definition}

The original definition in \cite{r} required $h$ above to be surjective but this requirement is unnecessary
for continuous flows on compact metric spaces.
Taking $\epsilon=1$ in Definition \ref{ex} we get immediately that every expansive flow is Ruggiero expansive
(the converse is known for certain examples like the geodesic flows on closed manifolds without conjugated points).

We can reformulate Definition \ref{r-ex} by saying $\phi$ is Ruggiero expansive if and only if there is $\delta>0$ such that
\begin{equation}
\label{eq1}
\Gamma_\delta(x)\subset \phi_{\mathbb{R}}(x),
\quad\quad\forall x\in X,
\end{equation}
where
$$
\Gamma_\delta(x)=\displaystyle\bigcup_{h\in \mathcal{B}}\displaystyle\bigcap_{t\in\mathbb{R}}\phi_{-h(t)}(B[\phi_t(x),\delta]).
$$
It is this reformulation which allows us to define expansive measures for flows.
Recall that {\em Borel measure} is a non-negative $\sigma$-additive map $\mu$ defined in the Borel $\sigma$-algebra of $X$
(\cite{bo}).
For any subset $B\subset X$ we write $\mu(B)=0$ if $\mu(A)=0$ for every Borel set $A\subset B$.

\begin{definition}
 \label{exp-meas}
We say that a Borel measure $\mu$ of $X$ is {\em expansive} for a flow $\phi$ on $X$ if there is $\delta>0$ such that
$\mu(\Gamma_\delta(x))=0$, $\forall x\in X$.
Such a $\delta$ is called {\em expansivity constant} of $\mu$ (with respect to $\phi$).
\end{definition}

\begin{rmk}
Similar definition takes place for a topological group action
$\phi:X\times G\to X$. Indeed, we only have to replace $\mathbb{R}$ and $0$ by $G$ and the neutral element
of $G$ respectively. For uniform spaces replace the $\delta$ in the definition
of $\Gamma_\delta(x)$ by an entourage of the uniformity (compare with \cite{ms}).
\end{rmk}

Our first result gives a necessary and sufficient condition for
a Borel probability measure to be expansive.
To state it we say that a flow $\phi$ on $X$  is continuous if it does as a map
from $\mathbb{R}\times X$ into $X$, where $X\times \mathbb{R}$ is equipped with the product
metric.
If $x\in X$ satisfies $\phi_t(x)=x$ for every $t\in \mathbb{R}$,
then we say that $x$ is a {\em singularity} of $\phi$.

\begin{thm}
\label{A2}
Let $\phi$ be a continuous flow {\em without singularities}
of a compact metric space $X$.
Then, a Borel probability measure $\mu$ of $X$ is expansive for $\phi$
if and only if
there is $\alpha>0$ such that
$\mu(\Gamma_\alpha(x))=0$ for $\mu$-a.e. $x\in X$.
\end{thm}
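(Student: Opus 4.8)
The forward implication is immediate, taking $\alpha=\delta$. For the converse I would use a \emph{doubling} trick adapted from the discrete case. Assume there is $\alpha>0$ and a Borel set $G$ with $\mu(G)=1$ and $\mu(\Gamma_\alpha(y))=0$ for every $y\in G$, and set $\delta=\alpha/2$ (shrinking it further if required below). Since $\Gamma_\lambda(x)\subseteq\Gamma_{\lambda'}(x)$ whenever $\lambda\le\lambda'$ (the same $h\in\mathcal{B}$ works), $\Gamma$ is monotone in its radius, so for $x\in G$ one already has $\mu(\Gamma_\delta(x))\le\mu(\Gamma_\alpha(x))=0$. Thus the only genuine issue is to dispose of the null set $X\setminus G$, and the claim to prove is that $\mu(\Gamma_\delta(x))=0$ for \emph{every} $x\in X$.

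Fix $x\in X$ and suppose, for contradiction, that $\mu(\Gamma_\delta(x))\neq 0$. By the convention on vanishing measure there is a Borel set $A\subseteq\Gamma_\delta(x)$ with $\mu(A)>0$. Because $\mu(G)=1$ we get $\mu(A\cap G)=\mu(A)>0$, so $A\cap G\neq\emptyset$; pick $y\in A\cap G\subseteq\Gamma_\delta(x)\cap G$. The plan is then to prove the inclusion $\Gamma_\delta(x)\subseteq\Gamma_{2\delta}(y)$; since $2\delta\le\alpha$ and $\Gamma$ is monotone in its radius, this yields $\Gamma_\delta(x)\subseteq\Gamma_\alpha(y)$. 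Granting it, $A$ is a Borel subset of $\Gamma_\alpha(y)$ with $\mu(A)>0$, contradicting $\mu(\Gamma_\alpha(y))=0$ (as $y\in G$). Hence $\mu(\Gamma_\delta(x))=0$, as desired.

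Everything therefore reduces to a triangle-type inequality: if $y,z\in\Gamma_\delta(x)$ then $z\in\Gamma_{2\delta}(y)$. Unwinding the definition, there are $g,h\in\mathcal{B}$ with $d(\phi_t(x),\phi_{g(t)}(y))\le\delta$ and $d(\phi_t(x),\phi_{h(t)}(z))\le\delta$ for all $t\in\mathbb{R}$, so the triangle inequality in $X$ gives $d(\phi_{g(t)}(y),\phi_{h(t)}(z))\le 2\delta$ for every $t$. If $g$ were an orientation-preserving homeomorphism of $\mathbb{R}$, then $k:=h\circ g^{-1}$ would lie in $\mathcal{B}$ (note $k(0)=h(g^{-1}(0))=h(0)=0$) and, writing $s=g(t)$, would satisfy $d(\phi_s(y),\phi_{k(s)}(z))\le 2\delta$ for all $s\in\mathbb{R}$, witnessing $z\in\Gamma_{2\delta}(y)$.

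I expect the main obstacle to be precisely this last point, since elements of $\mathcal{B}$ are only required to be continuous with value $0$ at $0$ and need be neither injective nor surjective, so the composition $h\circ g^{-1}$ is not available a priori. This is where I would invoke the standing hypotheses. Using that $\phi$ has no singularities and $X$ is compact, I would first establish a uniform separation lemma---the absence of singularities forcing a positive lower bound on periods together with uniform local injectivity of the maps $t\mapsto\phi_t(x)$ at the scale of a small radius---and deduce that, after shrinking $\delta$ if necessary, any reparametrization witnessing membership in some $\Gamma_\delta(\cdot)$ may be replaced by an increasing homeomorphism of $\mathbb{R}$. Carrying out this reduction rigorously is the heart of the argument; the measure-theoretic scheme above is then routine.
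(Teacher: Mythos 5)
Your measure-theoretic scheme is exactly the paper's: fix $\alpha$, choose a suitable $\delta$, suppose some $\Gamma_\delta(x)$ contains a Borel set $A$ of positive measure, intersect $A$ with the full-measure set of points $y$ with $\mu(\Gamma_\alpha(y))=0$ to find such a $y$ inside $\Gamma_\delta(x)$, and conclude from the inclusion $\Gamma_\delta(x)\subset\Gamma_\alpha(y)$. The step you defer --- upgrading the reparametrizations witnessing membership in $\Gamma_\delta(\cdot)$ to increasing homeomorphisms of $\mathbb{R}$, using compactness and the absence of singularities --- is precisely the paper's Lemma \ref{thomas1} (essentially Theorem 3 of Bowen--Walters) together with its Corollary \ref{c1}, so you have correctly isolated the one nontrivial ingredient; but since you do not carry it out, that is where the real work remains.

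Two points to watch when you do. First, the quantifiers: the upgrade degrades the distance bound. Lemma \ref{thomas1} produces, for each $\alpha>0$, a $\delta>0$ such that $y\in\Gamma_\delta(x)$ yields an increasing homeomorphism $\hat h\in\mathcal{B}$ with $d(\phi_t(x),\phi_{\hat h(t)}(y))<\alpha$ for all $t$ --- the new bound is $\alpha$, not $\delta$, because $\hat h$ agrees with the original $h$ only on a grid $T\mathbb{Z}$ and in between one pays $\sup_{(z,u)\in X\times[0,T]}d(z,\phi_u(z))$. Consequently your $\delta$ cannot simply be $\alpha/2$ (even ``shrunk if necessary''): it must be the constant supplied by the lemma for $\alpha/2$, and the triangle inequality then gives $\Gamma_\delta(x)\subset\Gamma_\alpha(y)$ directly rather than through a clean doubling $\Gamma_\delta\subset\Gamma_{2\delta}$. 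Second, the mechanism: what the no-singularity hypothesis actually buys (via Bowen--Walters) is constants $T,\tau_T>0$ such that any $h\in\mathcal{B}$ witnessing $\delta_T$-closeness of two orbits satisfies $h(t+T)-h(t)\geq\tau_T$ for all $t$; one then sets $\hat h(nT)=h(nT)$ and interpolates linearly, the increment estimate being exactly what makes $\hat h$ an increasing homeomorphism. Your appeal to ``a positive lower bound on periods'' gestures in the right direction but the estimate one needs is on increments of the reparametrization, not on periods. With Lemma \ref{thomas1} supplied, your argument closes exactly as in the paper.
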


To state our second result we will need more notations.
Let $\phi$ be a flow of $X$.
By a {\em periodic point} of $\phi$ we mean a point $x\in X$ for which there is a minimal
$t>0$ satisfying $\phi_t(x)=x$.
This minimal $t$ is the so-called {\em period} denoted by $t_x$.
Denote by $Sing(\phi)$ and $Per(\phi)$ the set of singularities and periodic points of $\phi$ respectively.

The {\em support} of a Borel measure $\mu$ on $X$ will be denoted by supp$(\mu)$.
Given another metric space $Y$ and a Borel measurable map $f: X\to Y$ we define
the pullback measure $f_*(\mu)=\mu\circ f^{-1}$ on $Y$. If $X=Y$ we say that $\mu$ is {\em invariant} for $f$ if $f_*\mu=\mu$.
For flows $\phi$ we say that $\mu$ is invariant for $\phi$ if it does for the time-$t$ map $\phi_t$, $t\in \mathbb{R}$.

We say that a Borel measure $\mu$ {\em vanishes along the orbits} of a flow $\phi$
if $\mu(\phi_\mathbb{R}(x))=0$ for every $x\in X$
(this is a non-trivial hypothesis, see \cite{b}).
It is clear that all such measures are {\em non-atomic}
(i.e. takes the value zero at single-point sets) but not conversely.
We also say that $\phi$ is {\em aperiodic} with respect to a Borel measure $\mu$
if $\mu(Per(\phi))=0$ (c.f. \cite{g}).
An {\em equivalence} between continuous flows $\phi$ on $X$ and
$\psi$ on another metric space $Y$ is a homeomorphism
$f: X\to Y$ carrying the orbits of $\phi$ onto orbits of $\psi$
(in such a case we say that the flows are {\em equivalent}).

With these definitions we can state our first result.

\begin{thm}
\label{general}
The following properties hold for every
continuous flow $\phi$ of a compact metric space and every Borel measure $\mu$:
\begin{enumerate}
\item[(A1)]
If $\mu$ is expansive for $\phi$, then
supp$(\mu)\cap Sing(\phi)=\emptyset$.
\item[(A2)]
If $\mu$ is expansive for
$\phi$, then $\phi$ is aperiodic with respect to $\mu$.
\item[(A3)]
If $f$ is an equivalence between $\phi$ and $\psi$, then $\mu$ 
is expansive for $\phi$ if and only if $f_*\mu$ is expansive for $\psi$.
\item[(A4)]
If $\mu$ is expansive for $\phi$, then $\mu$ vanishes along the orbits of $\phi$.
\item[(A5)]
If $\phi$ is expansive, then every Borel measure vanishing along the orbits of $\phi$
is expansive for $\phi$.
\end{enumerate}
\end{thm}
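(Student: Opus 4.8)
The plan is to isolate one elementary lemma and then read off all five items from it. Writing $\Gamma_\delta(x)=\bigcup_{h\in\mathcal{B}}\bigcap_t\phi_{-h(t)}(B[\phi_t(x),\delta])$ as in \eqref{eq1}, I would first record two structural facts, valid for every continuous flow on a compact metric space and every $\delta>0$. Evaluating the defining condition at $t=0$, where $h(0)=0$, gives $\Gamma_\delta(x)\subseteq B[x,\delta]$. More usefully, uniform continuity of $\phi$ on $[-1,1]\times X$ yields a \emph{uniform tube lemma}: there is $\eta>0$, depending only on $\delta$, with $\phi_{[-\eta,\eta]}(z)\subseteq\Gamma_\delta(z)$ for every $z$ (take $h=\mathrm{id}$ and use $d(\phi_t(z),\phi_{t+s}(z))=d(w,\phi_s(w))\le\delta$ for $|s|\le\eta$, $w=\phi_t(z)$). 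When $\mu$ is expansive with constant $\delta$, every $\Gamma_\delta(z)$, hence every tube $\phi_{[-\eta,\eta]}(z)$, is $\mu$-null.

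With this in hand (A1), (A4), (A5) are immediate. For (A1), if $\sigma$ is a singularity then $\phi_t(\sigma)=\sigma$ for all $t$, so the $h=0$ term shows $\Gamma_\delta(\sigma)\supseteq B[\sigma,\delta]$; thus $\mu(B[\sigma,\delta])=0$ and $\sigma\notin\mathrm{supp}(\mu)$. For (A4), cover $\phi_\mathbb{R}(x)=\bigcup_{k\in\mathbb{Z}}\phi_{[-\eta,\eta]}(\phi_{2k\eta}(x))$ by countably many null tubes and use $\sigma$-additivity. For (A5), Definition \ref{ex} with any fixed $\epsilon$ gives $\delta$ with $\Gamma_\delta(x)\subseteq\phi_{(-\epsilon,\epsilon)}(x)\subseteq\phi_\mathbb{R}(x)$, so $\mu(\Gamma_\delta(x))\le\mu(\phi_\mathbb{R}(x))=0$ whenever $\mu$ vanishes along orbits.

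For (A3) I would transfer $\Gamma$-sets across the equivalence $f$. Since $f$ and $f^{-1}$ are uniformly continuous homeomorphisms of compact spaces carrying orbits to orbits, each orbit of $\phi$ admits a continuous reparametrization $\tau$ (a homeomorphism of $\mathbb{R}$ fixing $0$) with $f(\phi_s(\cdot))=\psi_{\tau(s)}(f(\cdot))$. Given $y\in\Gamma^\phi_\delta(x)$ witnessed by $h\in\mathcal{B}$, the composite $g=\tau_y\circ h\circ\tau_x^{-1}$ again lies in $\mathcal{B}$ (continuous, $g(0)=0$) and exhibits $f(y)\in\Gamma^\psi_{\delta'}(f(x))$, with $\delta'$ the modulus of continuity of $f$ at $\delta$. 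Hence $f(\Gamma^\phi_\delta(x))\subseteq\Gamma^\psi_{\delta'}(f(x))$, and taking $\delta$ small enough that $\delta'$ falls under an expansivity constant of $f_*\mu$ transfers nullity; the converse is symmetric via $f^{-1}$. The only delicate point here is the standard construction of the continuous reparametrizations $\tau$, available for equivalences in the sense of Bowen--Walters.

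The real work is (A2), and here the naive approaches fail: each periodic orbit is null by (A4), but $Per(\phi)$ may be uncountable and the null tubes are one-dimensional, so no countable family of them covers a thick set of orbits; and one cannot reduce to the closed sets $Fix(\phi_p)=\{x:\phi_p(x)=x\}$ over rational $p$, since the minimal period $t_x$ may be irrational, in which case $x$ lies in no such $Fix(\phi_p)$. I expect this incommensurability to be the main obstacle. The route I would pursue allows a linear reparametrization when comparing periodic points of nearby but unequal periods: if $\phi_p(x)=x$ and $\phi_{p'}(y)=y$ with $d(x,y)$ and $|p-p'|$ small, then $g(t)=(p'/p)t$ lies in $\mathcal{B}$ and makes $t\mapsto d(\phi_t(x),\phi_{g(t)}(y))$ a $p$-periodic function which, by uniform continuity of $\phi$ on $[0,n]\times X$, stays below $\delta$, so $y\in\Gamma_\delta(x)$. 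Now set $K_n=\{(x,p):\phi_p(x)=x,\ 1/n\le p\le n\}$, a compact subset of $X\times[1/n,n]$ whose projection $\pi(K_n)$ exhausts $Per(\phi)$ as $n\to\infty$. Covering $K_n$ by finitely many boxes $B(x_i,\rho)\times(p_i-\sigma,p_i+\sigma)$ with $(x_i,p_i)\in K_n$, the shadowing estimate places every point of $\pi(K_n)$ in some $\Gamma_\delta(x_i)$; hence $\mu(\pi(K_n))=0$ and $\mu(Per(\phi))=0$ by $\sigma$-additivity. The point that makes the finite subcover legitimate is that $\rho$ and $\sigma$ are uniform over $K_n$, depending only on the modulus of continuity of $\phi$ on $[0,n]\times X$.
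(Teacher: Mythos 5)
Your proposal is correct, and for (A1), (A3), (A4) and (A5) it follows essentially the paper's route: (A1) via $B[\sigma,\delta]\subset\Gamma_\delta(\sigma)$, (A4) via a countable cover of each orbit by flow-tubes $\phi_{[-\eta,\eta]}(z)\subset\Gamma_\delta(z)$ (your direct cover $\bigcup_k\phi_{[(2k-1)\eta,(2k+1)\eta]}(x)$ is in fact cleaner than the paper's Lindel\"of case distinction), (A5) by reading off the Bowen--Walters definition, and (A3) by transporting $\Gamma$-sets through the orbit reparametrizations of \cite{bw} together with uniform continuity --- this is exactly the paper's Lemma \ref{mane}, only stated for $f$ rather than $f^{-1}$.

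The genuine divergence is in (A2), where your argument is organized quite differently and, I think, more directly. The paper proves $\mu(Per_T(\phi))=0$ by contradiction: it picks a point $x_*$ with $\mu(B[x_*,\beta]\cap Per_T(\phi))>0$, shows via Lemma \ref{localiza} that nearby points of $Per_T(\phi)$ have periods close to integer multiples of $t_{x_*}$, and then applies a shadowing lemma (Lemma \ref{period}, with a piecewise-linear reparametrization interpolating $pa+q\alpha\mapsto pb+q\alpha$) to trap $B[x_*,\beta]\cap Per_T(\phi)$ inside the null set $\Gamma_\delta(x_*)$. You instead lift the problem to the compact sets $K_n=\{(x,p):\phi_p(x)=x,\ 1/n\le p\le n\}$ in $X\times[1/n,n]$, use the linear reparametrization $g(t)=(p'/p)t$ (which makes $t\mapsto d(\phi_t(x),\phi_{g(t)}(y))$ $p$-periodic, so one only estimates on $[0,p]$ where $|g(t)-t|\le|p'-p|$), and extract a finite subcover; this eliminates both the localization lemma and the contradiction argument, at the price of no new ideas --- the core estimate is the same uniform-continuity shadowing as Lemma \ref{period}. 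Two small points to tighten: your $\pi(K_n)$ also contains $Sing(\phi)$ (harmless, since the same estimate applies when $x_i$ is a singularity), and to conclude $\mu(\pi(K_n))=0$ under the paper's convention you should cover $K_n$ by \emph{closed} boxes so that the pieces $\pi\bigl(K_n\cap(B[x_i,\rho]\times[p_i-\sigma,p_i+\sigma])\bigr)$ are compact, hence Borel subsets of the null sets $\Gamma_\delta(x_i)$; a union of finitely many non-Borel null sets does not immediately dominate a Borel set. Neither point affects the validity of the approach.
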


Let us put some comments in light of the above result.

First of all we can observe that (A1) represents a measurable version of the well
known property of expansive flows that their singularities consists of isolated points.
The aperiodicity property in (A2) has been proved to be an important tool in the ergodic
theory of flows (e.g. \cite{g}).
Notice that (A3) implies two things. Firstly, that the property of being an expansive measure independs on the metric and, secondly,
that if $\mu$ is expansive for $\phi$, then so does $(\phi_T)_*\mu$, for every $T\in \mathbb{R}$.
On the other hand, (A4) implies that every expansive measure is non-atomic, but,
unlike the homeomorphism case, there are non-atomic measures which are not
expansive for certain expansive flows (take for instance a measure supported on an orbit).
We will see below in Example \ref{el} that the converse of (A5) is false.

Next we recall the definition of expansive measure for homeomorphisms \cite{am}.

\begin{definition}
\label{ar-mor}
We say that a Borel measure $\mu$ of $X$ is {\em expansive}
for a homeomorphism $f: X\to X$ if there is $\delta>0$ such that
$\mu(\hat{\Gamma}_\delta(x))=0$ for every $x\in X$, where
$$
\hat{\Gamma}_\delta(x)=\{y\in X:d(f^n(x),f^n(y))\leq \delta, \forall n\in\mathbb{Z}\}.
$$
(Notation $\hat{\Gamma}_\delta^f(x)$ indicates dependence on $f$).
\end{definition}

Now, notice that for continuous flows $\phi$ it is easy to see that the time $T$-map $\phi_T: X\to X$
defined by $\phi_T(x)=\phi(T,x)$ is a homeomorphism of $X$, $\forall t\in \mathbb{R}$.
With this observation we state our second result.

\begin{thm}
\label{general2}
If $\mu$ is an expansive measure of a continuous flow
$\phi$ on a compact metric space,
then $\mu$ is expansive for $\phi_T$, $\forall T\neq 0$.
\end{thm}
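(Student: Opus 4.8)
The plan is to reduce the statement to a purely set-theoretic containment between the dynamical balls of the time-$T$ map and those of the flow. Fix an expansivity constant $\delta>0$ of $\mu$ for $\phi$, so that $\mu(\Gamma_\delta(x))=0$ for every $x\in X$. I will produce a constant $\delta'>0$ and show that
$$
\hat{\Gamma}_{\delta'}^{\phi_T}(x)\subset \Gamma_\delta(x),\qquad\forall x\in X.
$$
Once this inclusion is in hand, every Borel subset of $\hat{\Gamma}_{\delta'}^{\phi_T}(x)$ is a Borel subset of $\Gamma_\delta(x)$ and is therefore $\mu$-null, whence $\mu(\hat{\Gamma}_{\delta'}^{\phi_T}(x))=0$ for all $x$; by Definition \ref{ar-mor} this is exactly the expansivity of $\mu$ for the homeomorphism $\phi_T$. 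Since $\{nT:n\in\mathbb{Z}\}=\{n|T|:n\in\mathbb{Z}\}$, the set $\hat{\Gamma}_{\delta'}^{\phi_T}(x)$ depends only on $|T|$, so there is no loss in assuming $T>0$.

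The inclusion rests on the uniform continuity of the flow on a compact slab of time. Because $X$ is compact and $\phi$ is continuous, the restriction $\Phi:[0,T]\times X\to X$, $\Phi(r,z)=\phi_r(z)$, is uniformly continuous; hence, for the constant $\delta$ above, I can choose $\delta'>0$ so that
$$
d(z,w)\leq \delta'\ \Longrightarrow\ d(\phi_r(z),\phi_r(w))\leq \delta\quad\text{for all } r\in[0,T].
$$
Now take $y\in \hat{\Gamma}_{\delta'}^{\phi_T}(x)$, i.e. $d(\phi_{nT}(x),\phi_{nT}(y))\leq \delta'$ for every $n\in\mathbb{Z}$. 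Writing an arbitrary $t\in\mathbb{R}$ as $t=nT+r$ with $n\in\mathbb{Z}$ and $r\in[0,T)$, the flow property gives $\phi_t(x)=\phi_r(\phi_{nT}(x))$ and $\phi_t(y)=\phi_r(\phi_{nT}(y))$, and therefore $d(\phi_t(x),\phi_t(y))\leq\delta$ by the choice of $\delta'$. Taking $h=\mathrm{id}\in\mathcal{B}$ (note $h(0)=0$), this says precisely that $\phi_{h(t)}(y)\in B[\phi_t(x),\delta]$ for every $t\in\mathbb{R}$, that is, $y\in \Gamma_\delta(x)$, which is the desired containment.

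The conceptual content is concentrated in the previous paragraph: flow expansivity tolerates an arbitrary continuous time reparametrization $h$, whereas the time-$T$ map offers only the rigid identity reparametrization. The anticipated obstacle is whether a single $h$ can be made to work simultaneously for all $t$, and the resolution is that the identity already suffices, because the uniform-continuity estimate upgrades control along the discrete lattice $T\mathbb{Z}$ to control along all of $\mathbb{R}$ with no reparametrization at all. Thus the only genuinely delicate point is the clean statement and application of $\delta'$, together with the remark that the relation $\mu(B)=0$ passes to Borel subsets so that the set-containment immediately yields the measure conclusion; both become routine once compactness of $X$ is invoked. Notably, no hypothesis on singularities or periodic points is used, so the argument is uniform over all $T\neq 0$.
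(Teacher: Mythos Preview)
Your argument is correct and follows essentially the same route as the paper: the paper isolates the inclusion $\hat{\Gamma}_\alpha^{\phi_T}(x)\subset\Gamma_\delta(x)$ as Lemma~\ref{lele}, proved via uniform continuity of $\phi$ on $[0,T]\times X$ and the identity reparametrization, and then the theorem is immediate. Your reduction to $T>0$ via $\hat{\Gamma}_{\delta'}^{\phi_T}=\hat{\Gamma}_{\delta'}^{\phi_{|T|}}$ is a clean way to handle the sign, equivalent to what the paper does implicitly.
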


The converse of this theorem is false by the following counterexample
which arose from discussions with Prof.  Arbieto:

\begin{ex}
It is well known that the so-called {\em geometric Lorenz attractor} \cite{abs}, \cite{gw}
supports an ergodic invariant probability measure with positive entropy (see for instance \cite{v}).
Such a measure was proved to be mixing \cite{lmp},
and so, ergodic for every time-$T$ map
with $T\neq0$.
From this an a result in \cite{am} we obtain that such a measure is also expansive for every time-$T$ maps $T\neq0$.
However, the support of such a measure has a singularity, and so, it is not expansive for the flow
by (A1) of Theorem \ref{general}.
\end{ex}

This example also shows that the result in \cite{am} that every ergodic invariant measure
with positive entropy of a homeomorphism of a compact metric space is expansive
is false for flows instead of homeomorphisms.
Furthermore, it motivates the question
if there are continuous flows on compact metric spaces
exhibiting a Borel measure, without equilibria in its support, which
is not expansive for the flow but expansive for every time-$T$ map with $T\neq0$.

To state our next result we will need the notion of suspension for homeomorphisms on metric spaces \cite{brw}, \cite{bw}.
Suppose that $X$ is compact. Then, we can assume that $diam(X)=1$
(otherwise we work with the equivalent metric $\frac{d}{diam(X)}$).
Given a continuous map $\tau: X\to (0,\infty)$ we define
$$
Y^\tau=\{(x,t)\in X\times \mathbb{R}:0\leq t\leq \tau(x)\},
$$
It is convenient to identify $X$ with the base $X\times \{0\}$ in $Y^\tau$.
If $f: X\to X$ is a homeomorphism we can
identify the points $(x,1)$ and $(f(x),0)$ of $Y^\tau$. This yields an equivalence relation whose class at
$(x,t)$ will still be denoted by $(x,t)$.
The corresponding quotient space will be denoted by $Y^{\tau,f}$.
The {\em suspension flow over $f$ with height function $\tau$} is the flow $\phi: Y^{\tau,f}\times \mathbb{R}\to Y^{\tau,f}$ defined
by $\phi^{\tau,f}_t(x,s)=(x,t+s)$.

There is a natural metric $d^{\tau,f}$ on $Y^{\tau,f}$ making it a compact metric space whenever $X$ is
(this is called {\em Bowen-Walter metric} in \cite{brw}). There is also
an operator $T^{\tau,f}: \mu\mapsto T^{\tau,f}(\mu)$ taking a Borel measure $\mu$ of $X$ into the Borel measure
$T^{\tau,f}(\mu)$ of $Y^{\tau,f}$ defined implicitely by
\begin{equation}
\label{eq9}
\int_{Y^{\tau,f}}h(y)dT^{\tau,f}(\mu)(y)=\int_X\frac{1}{\tau(x)}\int_0^{\tau(x)}h(\phi_t^{\tau,f}(x))dt\,d\mu(x)
\end{equation}
for every continuous map $h: Y^{\tau,f}\to \mathbb{R}$.
A nice property of this operator is that
a Borel measure $\mu$ of $X$ is a probability or non-atomic or invariant for $f$ depending on whether
there is a continuous map $\tau: X\to (0,\infty)$ such that
$T^{\tau,f}(\mu)$ is a probability or vanishes along the orbits of $\phi^{\tau,f}$ or invariant for the suspension
flow over $f$ with height function $\tau$ respectively.
Another interesting property is given below.

\begin{thm}
\label{general3}
If $f: X\to X$ is a homeomorphism of a compact metric space $X$,
then a Borel measure $\mu$ of $X$ is expansive for $f$
if and only if there is a continuous map $\tau: X\to (0,\infty)$ such that
$T^{\tau,f}(\mu)$ is expansive for suspension flow over $f$ with height function $\tau$.
\end{thm}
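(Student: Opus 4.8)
The plan is to reduce the statement to a purely topological comparison between the flow set $\Gamma_\delta$ of the suspension and the set $\hat{\Gamma}^f_{\delta'}$ of Definition \ref{ar-mor}, and then to transfer the resulting null-set information across the operator $T^{\tau,f}$ by Fubini's theorem applied to the defining formula (\ref{eq9}). Throughout write $\phi=\phi^{\tau,f}$, identify $Y^{\tau,f}$ with the set of classes $(x,s)$, $0\le s\le\tau(x)$, and recall that for $\tau$ bounded below the Bowen--Walters metric $d^{\tau,f}$ is, away from the identification locus, uniformly comparable to the product metric $\max(d_X,|\cdot|)$ on base times fibre. The core is the following measure-free statement, in the spirit of Bowen and Walters \cite{bw}. \emph{Comparison Lemma:} there are constants $C\ge 1$ and $c>0$, depending only on $\tau$ and the metric, such that for every $x\in X$ and every $s_0$, if $\delta$ is small enough then (i) $\Gamma_\delta((x,s_0))$ projects into the base set $\hat{\Gamma}^f_{C\delta}(x)$, that is, every $(w,r)\in\Gamma_\delta((x,s_0))$ has $w\in\hat{\Gamma}^f_{C\delta}(x)$; and (ii) conversely $\hat{\Gamma}^f_{\delta'}(x)\times[0,c\delta]\subseteq\Gamma_\delta((x,0))$ whenever $\delta'\le\delta/C$ and $c\delta\le\inf\tau$.

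Granting the lemma, for the direct implication I take the constant height $\tau\equiv 1$, so that, by (\ref{eq9}) (extended to Borel sets by a monotone class argument), $T^{1,f}(\mu)=\mu\times\mathrm{Leb}|_{[0,1]}$ and $T^{1,f}(\mu)(A)=\int_X\mathrm{Leb}\{t\in[0,1]:(w,t)\in A\}\,d\mu(w)$ for Borel $A$. Let $\delta'$ be an expansivity constant of $\mu$ for $f$ and choose $\delta$ with $C\delta\le\delta'$. For any $y=(x,s_0)$, part (i) gives $\Gamma_\delta(y)\subseteq\hat{\Gamma}^f_{\delta'}(x)\times[0,1]$, so the $w$-slice of any Borel $A\subseteq\Gamma_\delta(y)$ is empty off the $\mu$-null set $\hat{\Gamma}^f_{\delta'}(x)$; hence the integral vanishes and $T^{1,f}(\mu)(\Gamma_\delta(y))=0$. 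Thus $T^{1,f}(\mu)$ is expansive for $\phi^{1,f}$.

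For the converse, suppose $T^{\tau,f}(\mu)$ is expansive for $\phi^{\tau,f}$ with constant $\delta$, take $\delta'\le\delta/C$ as in (ii), and fix $x\in X$. Given any Borel $B\subseteq\hat{\Gamma}^f_{\delta'}(x)$, part (ii) yields $B\times[0,c\delta]\subseteq\Gamma_\delta((x,0))$, whence, using (\ref{eq9}) once more, $0=T^{\tau,f}(\mu)(B\times[0,c\delta])=\int_B \frac{c\delta}{\tau(w)}\,d\mu(w)$. Since $\tau$ is bounded on the compact space $X$, the integrand is bounded below by a positive constant, which forces $\mu(B)=0$; as $B$ was arbitrary this gives $\mu(\hat{\Gamma}^f_{\delta'}(x))=0$ for every $x$, i.e. $\mu$ is expansive for $f$.

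The delicate point, and the technical heart of the argument, is part (i) of the Comparison Lemma: a flow-shadowing relation $d^{\tau,f}(\phi_{h(t)}(w,r),\phi_t(x,s_0))\le\delta$ must be shown to descend to genuine $f$-shadowing $d_X(f^nw,f^nx)\le C\delta$ for all $n\in\mathbb{Z}$. The obstacle is that the reparametrisation $h\in\mathcal{B}$ is only assumed continuous with $h(0)=0$, so a priori it could be non-monotone and thereby revisit or skip floors of the suspension; I expect this to be where all the work lies. One must use that $\phi$ is nonsingular with unit speed in the fibre direction to show that, for $\delta$ small, $h$ is forced to be increasing and to track the flow, so that the $n$-th base return of $(x,s_0)$ is shadowed by the $n$-th base return of $(w,r)$; pinning $h(0)=0$ fixes the common index at $t=0$ and continuity then propagates the matching to every floor. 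This floor-preservation estimate, carried out in the Bowen--Walters metric near the identification $(w,\tau(w))\sim(f(w),0)$, is exactly the suspension analogue of the classical argument of \cite{bw}. Part (ii) is easier: for $w\in\hat{\Gamma}^f_{\delta'}(x)$ one builds the required $h\in\mathcal{B}$ explicitly, increasing and piecewise linear, sending the base-return times of $(x,0)$ to those of $(w,r)$; uniform continuity of $\tau$ turns $d_X(f^nw,f^nx)\le\delta'$ into closeness of consecutive return times, and re-synchronising at each return keeps the two orbits within $C\delta'\le\delta$ for all $t$, the admissible fibre thickness $c\delta$ being governed by the permitted initial height $r$.
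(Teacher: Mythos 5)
Your overall architecture is sound, and your forward direction is essentially the paper's: part (i) of your Comparison Lemma is precisely the paper's Lemma \ref{suspension1}, and the integration of the resulting inclusion against the disintegration formula (\ref{eq9}) is word for word what the paper does. Your converse, however, takes a genuinely different route: the paper never proves a reverse inclusion of dynamical balls. Instead it reduces to $\tau\equiv 1$ via the equivalence $\lambda(x,t)=(x,t/\tau(x))$ and (A3), passes to the time-one map $\phi^{1,f}_1$ via Theorem \ref{general2}, identifies $\phi^{1,f}_1$ on $X\times[0,\frac{1}{2}]$ with $f\times\mathrm{Id}$ and $T^{1,f}(\mu)$ there with $\mu\times m$, and concludes from expansiveness of the product. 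Your version is more symmetric and self-contained (it does not need Theorem \ref{general2}), but it pays for this by requiring the harder inclusion (ii), which the paper's route avoids entirely; if you keep it, note that the threshold $\delta'$ in (ii) is governed by the modulus of continuity of $\tau$, not by a linear relation $\delta'\leq\delta/C$.

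There are two genuine gaps. First, part (i) of your Comparison Lemma is false as stated with the base metric $d_X$ and a multiplicative constant $C$: near the identification locus $(w,\tau(w))\sim(f(w),0)$ the Bowen--Walters distance between $(w,s)$ and $(x,s)$ degenerates to (roughly) $d(f(w),f(x))$ as $s\to\tau(x)$, so closeness of the suspended orbits at such times controls $d(f^{n+1}w,f^{n+1}x)$ but not $d(f^nw,f^nx)$; no uniform constant $C$ repairs this. The paper's Lemma \ref{suspension1} resolves it by proving the inclusion into $\hat{\Gamma}^f_{\delta}(x)$ taken with respect to the adapted metric $d'(x_1,x_2)=\min\{d(x_1,x_2),d(f(x_1),f(x_2))\}$, and then using that compact metrics inducing the same topology give the same class of expansive measures. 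Second, and more importantly, the lemma you correctly identify as ``the technical heart'' is asserted rather than proved: your heuristic that $h$ is ``forced to be increasing and to track the flow'' is not established, and in fact the paper's proof does not show monotonicity of $h$ at all. It instead synchronizes at the mid-fibre point $(x,\frac{1}{2})$, uses $\delta<\frac{1}{4}$ to pin down the integer number of base crossings of the companion orbit at each time $h(n)$, and propagates the matching floor by floor. Some such quantitative floor-counting argument is indispensable, and without it the proposal is a plan rather than a proof.
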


This result represents a link between expansive measures for homeomorphisms and their corresponding suspensions.
We can use it to produce a counterexample for the converse of
(A5) in Theorem \ref{general}.

\begin{ex}
\label{el}
Consider a compact metric space $X$ supporting
$n$-expansive homeomorphisms $f: X\to X$
which are not expansive \cite{mm} and
the suspension flow $\phi^{1,f}$ over $f$ with height function $\tau=1$.
It turns out that $Y^{1,f}$ has at least one measure vanishing along the orbits of $\phi^{1,f}$,
namely, $T^{1,f}(\mu)$ for some non-atomic measure $\mu$ on $X$.
Furthermore, every Borel measure vanishing along the orbits of $\phi^{1,f}$ is expansive
for $\phi^{1,f}$ but $\phi^{1,f}$ is not expansive since $f$ is not \cite{bw}.
\end{ex}

This counterexample motivates the study of the class of flows defined below.

\begin{definition} We say that a flow $\phi$ is {\em measure-expansive}
if every Borel measure vanishing along the orbits of $\phi$ is expansive for $\phi$.
\end{definition}

At first glance we can apply our results to prove the following result.
The term {\em closed surface} means a compact connected boundaryless manifold
of dimension two.
By {\em non-trivial recurrence} of a flow $\phi_t$ on a metric space $X$ we mean a non-periodic
point $x_0$ which is {\em recurrent} in the sense that $x_0\in \omega(x_0)$, where
$$
\omega(x)=\left\{y\in X:y=\lim_{n\to \infty}\phi(x,t_n)\mbox{ for some sequence }t_n\to\infty\right\},
\quad\quad\forall x\in X.
$$

\begin{thm}
\label{chino}
There are no continuous measure-expansive flows with non-trivial recurrence of closed surfaces.
\end{thm}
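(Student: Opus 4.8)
The plan is to argue by contradiction. Suppose $\phi$ is a continuous measure-expansive flow of a closed surface $S$ possessing a non-periodic recurrent point $x_0\in\omega(x_0)$; I will exhibit a Borel measure that vanishes along the orbits of $\phi$ but is not expansive, contradicting measure-expansiveness. Since $x_0$ is recurrent yet not periodic it is not a singularity, so $\phi$ is nonsingular on a neighborhood of the orbit of $x_0$ and one may use a flow-box theorem for continuous nonsingular flows to obtain a local cross-section $\Sigma$ at $x_0$, which is one-dimensional because $S$ is a surface. Recurrence of $x_0$ produces a first-return map $P$ on a subset of $\Sigma$, a homeomorphism onto its image for which $x_0$ returns infinitely often.

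The first key point is that $\Sigma$ cannot be an arc. Indeed, a homeomorphism of an arc is monotone, so after passing to $P^2$ it is increasing, and the orbits of an increasing interval homeomorphism are monotone sequences; hence its only recurrent points are fixed points. This would make $x_0$ periodic for $\phi$, a contradiction. Therefore the non-trivial recurrence forces the transversal to be a \emph{circle} and the return map $P$ to be a circle homeomorphism for which $x_0$ is recurrent and non-periodic, i.e. $P$ has irrational rotation number (no periodic points on the recurrent orbit). This structural reduction is where one must invoke the theory of flows on surfaces (existence of suitable transversal circles and the Poincar\'e--Bendixson--Schwartz description of recurrent orbits).

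Granting this, the second step uses that a circle homeomorphism with irrational rotation number is never expansive: whether $P$ is conjugate to a rotation or of Denjoy type, for every $\eta>0$ the set of $q\in\Sigma$ whose $P$-orbit stays within transverse distance $\eta$ of the $P$-orbit of $x_0$ is a nondegenerate arc $J\ni x_0$ (for a rotation this is immediate since $P$ is an isometry; in general it follows from the monotone, rotation-number structure). Reparametrizing time to match successive return times, the $\phi$-orbit of each point of $J$ then shadows the orbit of $x_0$, so that $\phi_{\mathbb{R}}(J)\subset\Gamma_\delta(x_0)$ for a suitable $\delta=\delta(\eta)$; letting $\eta\to0$ yields such a two-dimensional flow-box piece inside $\Gamma_\delta(x_0)$ for \emph{every} $\delta>0$.

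Finally I would build a single Borel measure $\mu$ on $S$ that vanishes along the orbits of $\phi$ — the orbits being one-dimensional, hence $\mu$-null — while assigning positive mass to these saturated flow-boxes around $x_0$; concretely, saturate a transverse (e.g. arclength) measure on a transversal circle through the recurrent orbit. Then $\mu(\Gamma_\delta(x_0))>0$ for every $\delta>0$, so $\mu$ is not expansive for $\phi$, contradicting measure-expansiveness and proving the theorem. When a \emph{global} transversal circle exists, as for the irrational flow on the torus, one may instead model $\phi$ as a suspension and apply Theorem \ref{general3} to transfer expansivity of $\mu$ to an expansive measure of the circle homeomorphism $P$, which is impossible; Theorem \ref{general2} confirms the corresponding time-$T$ reduction. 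The main obstacle is precisely the structural reduction of the second paragraph: the return map $P$ is in general only \emph{partially} defined, and securing a genuine transversal circle on which $P$ is a bona fide circle homeomorphism with non-periodic recurrence — uniformly enough that the shadowing arc $J$ at the given point $x_0$ is nondegenerate for every $\delta$ — is the delicate point. Once this one-dimensional picture is in place, the measure-theoretic contradiction via $\mu(\Gamma_\delta(x_0))>0$ is routine from the definitions together with Theorem \ref{general}.
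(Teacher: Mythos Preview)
Your endpoint is the right one --- reduce to a circle return map and use that no non-atomic measure on the circle is expansive for a homeomorphism --- and the paper reaches the same contradiction via Theorem~\ref{general3}. But the reduction you sketch has two genuine gaps that the paper handles quite differently.

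\medskip
\textbf{The ``$\Sigma$ cannot be an arc'' step is a category error.} A local cross-section through $x_0$ on a surface \emph{is} an arc; what you need is a \emph{global} closed transversal. Your monotonicity argument presupposes that $P$ is a self-homeomorphism of the whole arc $\Sigma$, but you yourself note that $P$ is only defined on a subset and maps onto its image. With a merely partial return map the monotonicity argument collapses: first-return maps to transversal arcs for surface flows are interval-exchange-like and can perfectly well have non-periodic recurrent points. So this paragraph does not force a circle transversal, and without it the shadowing arc $J$ and the measure you build on its saturation are not available.

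\medskip
\textbf{Singularities are never eliminated.} You assume only that $\phi$ is nonsingular near the orbit of $x_0$, but to obtain a global closed transversal and a globally defined return map you need $Sing(\phi)=\emptyset$ on the whole surface (otherwise Poincar\'e--Bendixson-type arguments fail and orbits can spiral into equilibria). The paper's device here is worth noting: first invoke Gutierrez to make $\phi$ of class $C^1$ (legitimate by (A3)), then observe that Lebesgue measure vanishes along the one-dimensional orbits and has full support; measure-expansiveness then forces Lebesgue measure to be expansive, and (A1) gives $Sing(\phi)=\emptyset$. With no singularities, Euler-characteristic considerations force the surface to be $T^2$ (after a double cover), and on $T^2$ one can cite a genuine closed transversal circle through $x_0$ (Hector--Hirsch) together with a Poincar\'e--Bendixson argument showing the return map is defined on the \emph{entire} circle. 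Only then does the suspension model and Theorem~\ref{general3} apply. Your alternative route in the last paragraph is exactly this, but you treat the existence of the global transversal as a side case rather than as the heart of the matter; the paper shows it is unavoidable and supplies the missing steps (smoothing, killing singularities, reducing to $T^2$) that you do not.
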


\begin{rmk}
\label{rrr}
We stress that the non-trivial recurrence hypothesis above is unnecessary and helps to simplify the proof.
\end{rmk}

Theorem \ref{general}-(A5) implies that every continuous expansive flow of a compact metric space is measure-expansive.
Therefore,
Theorem \ref{chino} and Remark \ref{rrr} imply the following result from \cite{hs}.

\begin{cor}
There are no continuous expansive flows of closed surfaces.
\end{cor}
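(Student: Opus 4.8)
The plan is to deduce the statement directly from the results already established, by a short argument by contradiction. First I would note that a closed surface $S$, being a compact connected boundaryless $2$-manifold, is in particular a compact metric space, so that Theorem \ref{general} and Theorem \ref{chino} both apply to any continuous flow on $S$. It therefore suffices to rule out the existence of a continuous expansive flow on $S$.

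Suppose, for the sake of contradiction, that $\phi$ is a continuous expansive flow of a closed surface $S$. The first step is to upgrade expansivity to measure-expansivity: by Theorem \ref{general}-(A5), since $\phi$ is expansive, every Borel measure vanishing along the orbits of $\phi$ is expansive for $\phi$. By the very definition of a measure-expansive flow this says precisely that $\phi$ is measure-expansive.

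The second step is to contradict this using Theorem \ref{chino}. As stated, that theorem forbids continuous measure-expansive flows only under the additional hypothesis of non-trivial recurrence; however, Remark \ref{rrr} asserts that this hypothesis is superfluous. Invoking the theorem in the strengthened form granted by the remark, there are no continuous measure-expansive flows of closed surfaces whatsoever. This contradicts the measure-expansivity of $\phi$ obtained above, and completes the argument.

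I expect the only delicate point to be the appeal to Remark \ref{rrr}: Theorem \ref{chino} alone would yield merely the non-existence of expansive flows possessing a non-periodic recurrent point, and the full conclusion rests on being able to discard that recurrence assumption. In the present deduction we simply cite the remark, so no real obstacle remains on our side; the substantive work is already encapsulated in Theorem \ref{chino} and in the classical structure theory of flows on surfaces underlying the remark, presumably a Poincar\'e--Bendixson type dichotomy separating the recurrent and non-recurrent cases.
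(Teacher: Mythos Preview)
Your argument is correct and matches the paper's own derivation essentially verbatim: the paper also invokes Theorem~\ref{general}-(A5) to pass from expansive to measure-expansive, and then cites Theorem~\ref{chino} together with Remark~\ref{rrr} to conclude. Your observation about the reliance on Remark~\ref{rrr} is apt, since the paper likewise leans on that remark without supplying the additional argument.
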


\section{Preliminars}

\noindent
We start with the following lemma.

\begin{lemma}
\label{l1}
Every expansive measure of a continuous flow $\phi$ on a compact metric space vanishes along the orbits of $\phi$.
\end{lemma}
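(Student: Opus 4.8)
The plan is to show that the expansivity constant $\delta$ of $\mu$ forces each full orbit to be contained in a countable union of sets of the form $\Gamma_\delta(z)$, each of which is $\mu$-null by hypothesis. The heart of the matter is the \emph{uniform local inclusion}
$$
\phi_{(-\epsilon,\epsilon)}(z)\subset \Gamma_\delta(z),\qquad \forall z\in X,
$$
for a single $\epsilon>0$ independent of $z$; granting this, the conclusion follows by a covering argument.

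To establish the inclusion I would first invoke uniform continuity of $\phi$ on the compact set $[-2,2]\times X$: given the expansivity constant $\delta$ there is $\epsilon\in(0,1)$ such that $a,b\in[-2,2]$ with $|a-b|<\epsilon$ implies $d(\phi_a(z),\phi_b(z))<\delta$ for every $z\in X$ (this is where compactness delivers a constant not depending on $z$). Fix $z$ and $r$ with $|r|<\epsilon$; I must produce $h\in\mathcal{B}$ with $d(\phi_{h(t)+r}(z),\phi_t(z))\le\delta$ for all $t$, since this is exactly the requirement $\phi_r(z)\in\phi_{-h(t)}(B[\phi_t(z),\delta])$ for all $t$. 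The naive reparametrization $h(t)=t-r$ fails the constraint $h(0)=0$, so instead I would let $h$ agree with $t-r$ for $|t|\ge 1$ (making the two orbit points literally coincide there) and interpolate piecewise-linearly through the origin on $[-1,1]$, e.g. $h(t)=(1-r)t$ on $[0,1]$ and $h(t)=(1+r)t$ on $[-1,0]$. A direct check gives $h\in\mathcal{B}$ and shows that $g(t):=h(t)+r$ satisfies $|g(t)-t|\le|r|<\epsilon$ with $g(t),t\in[-2,2]$ on $[-1,1]$, so the chosen $\epsilon$ yields $d(\phi_{g(t)}(z),\phi_t(z))\le\delta$ there and distance $0$ outside. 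Hence $\phi_r(z)\in\Gamma_\delta(z)$.

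With the inclusion in hand I would cover the orbit by $\phi_\mathbb{R}(x)=\bigcup_{n\in\mathbb{Z}}\phi_{(-\epsilon,\epsilon)}(z_n)$ with $z_n=\phi_{n\epsilon}(x)$, whence $\phi_\mathbb{R}(x)\subset\bigcup_n\Gamma_\delta(z_n)$. The delicate point is that the sets $\Gamma_\delta(z)$ and $\phi_{(-\epsilon,\epsilon)}(z)$ need not be Borel, while the hypothesis only asserts that every Borel subset of each $\Gamma_\delta(z)$ is $\mu$-null. To handle this cleanly I would use the blocks $C_n=\phi_{[n\epsilon,(n+1)\epsilon)}(x)$, each a countable union of compact arcs (hence Borel, being $F_\sigma$) and each contained in $\phi_{(-\epsilon,\epsilon)}(z_n)\subset\Gamma_\delta(z_n)$. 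Then for any Borel $A\subset\phi_\mathbb{R}(x)$ the pieces $A\cap C_n$ are Borel subsets of $\Gamma_\delta(z_n)$, hence genuinely $\mu$-null, so $A=\bigcup_n(A\cap C_n)$ is a countable union of $\mu$-null Borel sets and therefore $\mu$-null; this is precisely $\mu(\phi_\mathbb{R}(x))=0$ in the sense defined in the excerpt.

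The main obstacle is the construction of the reparametrization $h$: the constraint $h(0)=0$ rules out the naive time-shift, and one must verify that the interpolation keeps $\phi_{h(t)+r}(z)$ within the expansivity constant $\delta$ of $\phi_t(z)$ \emph{uniformly in} $z$, which is exactly what the compactness-based uniform continuity provides. The measurability bookkeeping in the final step is the secondary point to get right, and the $\sigma$-compactness of orbit arcs is what reduces it to routine countable additivity.
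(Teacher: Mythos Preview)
Your proof is correct and follows the same two-step strategy as the paper: first establish a uniform $\epsilon>0$ with $\phi_{(-\epsilon,\epsilon)}(z)\subset\Gamma_\delta(z)$ for all $z$ via compactness, then cover each orbit by countably many such short arcs. The paper obtains the inclusion by citation (p.~506 of \cite{a}) and handles the covering through a Lindel\"of argument, whereas you construct the reparametrization $h$ explicitly by piecewise-linear interpolation and use the direct cover $z_n=\phi_{n\epsilon}(x)$ with more careful Borel bookkeeping; the substance is the same.
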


\begin{proof}
Let $\mu$ be an expansive measure of $\phi$ with expansivity constant $\delta$.
It follows from compactness that there is
$\alpha>0$ such that $\phi_{(-\alpha,\alpha)}(x)\subset \Gamma_\delta(x)$ (and so $\mu(\phi_{(-\alpha,\alpha)}(x))=0$), $\forall x\in X$
(c.f. p. 506 in \cite{a}).

Given $x\in X$ we have two possibilities:
either $\phi_\mathbb{R}(x)$ is a closed (and hence Lindel\"of) subset of $X$
or the induced map $\phi^x: \mathbb{R}\to \phi_{\mathbb{R}}(x)$ defined by $\phi^x(t)=\phi(x,t)$ is a homeomorphism onto $\phi_\mathbb{R}(x)$.
In any case we can arrange sequences $x_k\in X$ and $\delta_k\in (0,\alpha)$ such that
$\{\phi_{(-\delta_k,\delta_k)}(x_k):k\in \mathbb{N}\}$ covers $\phi_{\mathbb{R}}(x)$ so
$$
\mu(\phi_{\mathbb{R}}(x))\leq \displaystyle\sum_{k\in\mathbb{N}}\mu(\phi_{(-\delta_k,\delta_k)}(x_k))=0.
$$
As $x\in X$ is arbitrary we conclude that $\mu(\phi_\mathbb{R}(x))=0$, $\forall x\in X$, thus proving the result.
\end{proof}

Another property of the expansive measures is given below.

\begin{lemma}
\label{sing-supp}
If $\phi$ is a flow of a compact metric space $X$, then $Sing(\phi)\cap supp(\mu)=\emptyset$ for every expansive measure $\mu$ of $\phi$.
\end{lemma}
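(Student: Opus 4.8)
The plan is to show directly that every singularity fails to belong to $\mathrm{supp}(\mu)$ by producing an open neighborhood of measure zero around it. So let $\mu$ be an expansive measure of $\phi$ with expansivity constant $\delta$, meaning $\mu(\Gamma_\delta(x))=0$ for all $x\in X$, and fix an arbitrary $\sigma\in Sing(\phi)$. It will suffice to prove that $\mu(B[\sigma,\delta])=0$, since then the open ball $B(\sigma,\delta)\subset B[\sigma,\delta]$ is a neighborhood of $\sigma$ of measure zero, forcing $\sigma\notin \mathrm{supp}(\mu)$.

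First I would use the defining property of a singularity, namely $\phi_t(\sigma)=\sigma$ for every $t\in\mathbb{R}$, to simplify the closed balls appearing in the definition of $\Gamma_\delta(\sigma)$: we get $B[\phi_t(\sigma),\delta]=B[\sigma,\delta]$ for all $t$. The key step is then to observe that the constant map $h\equiv 0$ lies in $\mathcal{B}$, because it is continuous and satisfies $h(0)=0$. Feeding this particular $h$ into
$$
\Gamma_\delta(\sigma)=\bigcup_{h\in\mathcal{B}}\bigcap_{t\in\mathbb{R}}\phi_{-h(t)}\bigl(B[\phi_t(\sigma),\delta]\bigr)
$$
isolates the single term $\bigcap_{t\in\mathbb{R}}\phi_0\bigl(B[\sigma,\delta]\bigr)=B[\sigma,\delta]$, which is one of the sets in the union. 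Hence $B[\sigma,\delta]\subset\Gamma_\delta(\sigma)$.

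Finally I would invoke the convention, recalled before Definition \ref{exp-meas}, that $\mu(\Gamma_\delta(\sigma))=0$ means $\mu(A)=0$ for every Borel set $A\subset\Gamma_\delta(\sigma)$. Since $B[\sigma,\delta]$ is closed, hence Borel, and is contained in $\Gamma_\delta(\sigma)$, we conclude $\mu(B[\sigma,\delta])=0$, and a fortiori $\mu(B(\sigma,\delta))=0$, completing the argument. As $\sigma\in Sing(\phi)$ was arbitrary, this gives $Sing(\phi)\cap \mathrm{supp}(\mu)=\emptyset$.

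I do not expect any genuine obstacle in this proof; the statement is essentially an unwinding of the definition of $\Gamma_\delta$ at a fixed point of the flow. The only points requiring care are the verification that $h\equiv 0$ is an admissible element of $\mathcal{B}$ (so that the term $B[\sigma,\delta]$ really does appear in the union) and the correct passage from $\mu(\Gamma_\delta(\sigma))=0$, read in the sense of the paper's convention for arbitrary subsets, to the honest measure-zero statement $\mu(B[\sigma,\delta])=0$ for the Borel set $B[\sigma,\delta]$.
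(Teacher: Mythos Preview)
Your proof is correct and follows essentially the same approach as the paper's. The paper's proof is very terse: it simply asserts $B(\sigma,\delta)\subset\Gamma_\delta(\sigma)$ by citing Lemma~1 of Bowen--Walters, whereas you supply the explicit argument (via the admissible reparametrization $h\equiv 0$) that justifies this inclusion; the remaining passage to $\mu(B(\sigma,\delta))=0$ and $\sigma\notin\mathrm{supp}(\mu)$ is identical.
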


\begin{proof}
If $\sigma\in Sing(\phi)$ we can prove as in Lemma 1 of \cite{bw} that
$B(\sigma,\delta)\subset \Gamma_\delta(\sigma)$, $\forall\delta>0$.
Taking $\delta$ as an expansivity constant we obtain $\mu(B(\sigma,\delta))=0$
so $\sigma\not\in supp(\mu)$.
\end{proof}

We shall use the following lemma which is essentially contained in \cite{bw}.
We include its proof for the sake of completeness.

\begin{lemma}
 \label{thomas1}
Let $\phi$ be a continuous flow without singularities of a compact metric space $X$.
Then, for every $\alpha>0$ there is $\delta>0$ such that for every $x,y\in X$ satisfying $y\in \Gamma_\delta(x)$ there is an increasing homeomorphism $\hat{h}\in \mathcal{B}$ satisfying
$d(\phi_t(x),\phi_{\hat{h}(t)}(y))<\alpha$, $\forall t\in\mathbb{R}$.
\end{lemma}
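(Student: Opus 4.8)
The plan is to begin by unwinding the hypothesis. If $y\in\Gamma_\delta(x)$ then, by the very definition of $\Gamma_\delta(x)$, there is some $h\in\mathcal{B}$ (that is, $h$ continuous with $h(0)=0$) such that $d(\phi_t(x),\phi_{h(t)}(y))\le\delta$ for all $t\in\mathbb{R}$. This witness $h$ need not be monotone, injective, or proper, so the whole point of the lemma is to manufacture from it an increasing homeomorphism $\hat h$ that still tracks the orbit of $x$ by that of $y$, and moreover tracks it within the prescribed $\alpha$. The mechanism I would use is the uniform local product structure (flow boxes) that is available precisely because $\phi$ has no singularities on the compact space $X$: projecting the nearby orbit of $y$ onto the local cross-sections of the orbit of $x$ will simultaneously straighten the reparametrization and kill the transverse error, thereby forcing the distance below $\alpha$.

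Concretely, I would first fix, for the given $\alpha>0$, a uniform flow box: using compactness together with $Sing(\phi)=\emptyset$ one obtains constants $\delta_1>0$ and $\alpha_1\in(0,\alpha)$ and a continuous displacement $s(p,q)$, defined whenever $d(p,q)\le\delta_1$, with $|s(p,q)|<\alpha_1$ and $s(p,p)=0$, such that $\phi_{-s(p,q)}(q)$ lies on a local cross-section $\Sigma_p$ through $p$ and remains within $\alpha/2$ of $p$; I would also take $\alpha_1$ so small that every time-$r$ map with $|r|<\alpha_1$ displaces points by less than $\alpha/2$. Now choose $\delta\le\delta_1$. For $y\in\Gamma_\delta(x)$ with witness $h$ as above, set $s(t)=s(\phi_t(x),\phi_{h(t)}(y))$, which is continuous in $t$, and define
\[
\hat h(t)=h(t)-s(t)+s(0).
\]
Then $\hat h(0)=0$, and since $\phi_{\hat h(t)}(y)=\phi_{s(0)}\big(\phi_{-s(t)}(\phi_{h(t)}(y))\big)$ is the image under a time-$s(0)$ map of a point of $\Sigma_{\phi_t(x)}$ lying within $\alpha/2$ of $\phi_t(x)$, the triangle inequality yields $d(\phi_t(x),\phi_{\hat h(t)}(y))<\alpha$ for every $t$.

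It remains to verify that $\hat h$ is an increasing homeomorphism of $\mathbb{R}$, and this is where I expect the real work and the main obstacle to lie, since it is exactly here that compactness and the absence of singularities become indispensable. Because a constant shift does not affect monotonicity, it suffices to analyze $h-s$. On any time interval short enough that $\phi_t(x)$ stays inside a single flow box, the orbit of $y$ meets the slices $\Sigma_{\phi_t(x)}$ transversally and exactly once, in order of increasing $t$, so the crossing time $h(t)-s(t)$ advances at a rate bounded below by a positive constant (essentially $1$ in the box coordinates). Patching these local estimates, which is legitimate because the flow-box constants are uniform over the compact $X$, shows that $\hat h$ is continuous, strictly increasing, and satisfies $\hat h(t)\to\pm\infty$ as $t\to\pm\infty$; hence $\hat h$ is a homeomorphism of $\mathbb{R}$ fixing $0$, that is, an increasing element of $\mathcal{B}$, completing the proof. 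The delicate points to get right are the uniformity of the flow-box constants, obtained from the tube/flow-box theorem together with the uniform lower bound on orbital speed coming from $Sing(\phi)=\emptyset$, and the uniform positive lower bound on the reparametrization rate, which is what forces $\hat h$ to be proper and hence surjective.
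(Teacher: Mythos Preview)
Your plan rests on a flow-box/cross-section picture that is natural for smooth flows on manifolds but is not available in the generality of the lemma, which concerns an arbitrary compact metric space. The phrases ``tube/flow-box theorem'' and ``uniform lower bound on orbital speed'' betray this: there is no orbital speed in a topological flow, and producing a \emph{continuous} family $p\mapsto\Sigma_p$ of local cross-sections (equivalently, a globally continuous displacement $s(p,q)$ with $\phi_{-s(p,q)}(q)\in\Sigma_p$) is not a standard fact on metric spaces. Bowen--Walters do construct local cross-sections, but not a coherent moving family with $\Sigma_{\phi_t(p)}=\phi_t(\Sigma_p)$, which is precisely what your monotonicity argument uses when you claim the crossing time $h(t)-s(t)$ ``advances at a rate bounded below by a positive constant''. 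Without that coherence, $s(t)$ could jump as you pass between charts, and the conclusion that $\hat h$ is strictly increasing (let alone proper) is unsupported. So the gap is not in the distance estimate --- that part is fine --- but in the very construction of $s(p,q)$ and in the monotonicity step you yourself flag as delicate.

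The paper sidesteps cross-sections entirely with a discrete trick. It quotes a purely metric lemma from Bowen--Walters: for all small $T>0$ there are $\delta_T,\tau_T>0$ such that any witness $h\in\mathcal B$ with $d(\phi_t(x),\phi_{h(t)}(y))<\delta_T$ for every $t$ automatically satisfies $h(t+T)-h(t)\ge\tau_T$. One then picks $T$ small enough that time-$u$ maps with $0\le u\le T$ move points by at most $\alpha/2$, sets $\delta<\min(\delta_T,\alpha/2)$, and defines $\hat h$ by sampling $h$ at the grid points $nT$ and interpolating linearly. The increment bound $h((n+1)T)-h(nT)\ge\tau_T>0$ makes $\hat h$ an increasing homeomorphism outright, and on each interval $[nT,(n+1)T]$ one has $\hat h(t)=h(t')$ for some $t'$ in the same interval, so a single triangle inequality gives $d(\phi_t(x),\phi_{\hat h(t)}(y))<\alpha$. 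This uses nothing beyond continuity and compactness, and in effect the Bowen--Walters increment lemma is doing the work you hoped the flow-box structure would do.
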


\begin{proof}
As in the proof of (ii) $\Rightarrow$ (i) in Theorem 3 of \cite{bw} we have that there is $T_0>0$ such that for every $0<T<T_0$ there are $\delta_T>0$ and $\tau_T>0$ such that
if $h\in \mathcal{B}$ satisfies $d(\phi_t(x),\phi_{h(t)}(y))<\delta_T$ for every $t\in\mathbb{R}$ and some $x,y\in X$,
then $h(t+T)-h(t)\geq\tau_T$ for every $t\in\mathbb{R}$.

Fix $\alpha>0$ and take $0<T<\frac{T_0}{3}$ such that
$$
\displaystyle\sup_{(z,u)\in X\times [0,T]}d(z,\phi_u(z))\leq \frac{\alpha}{2}.
$$
For this $T$ we fix $\delta_T$ and $\tau_T$ as above and also
$0<\delta<\min\left(\delta_T,\frac{\alpha}{2}\right)$.

Now suppose that $x,y\in X$ satisfies $y\in \Gamma_\delta(x)$.
Then, there is $h\in \mathcal{B}$ such that
$d(\phi_t(x),\phi_{h(t)}(y))<\delta$ for every $t\in\mathbb{R}$. As $\delta<\delta_T$ we have
$d(\phi_t(x),\phi_{h(t)}(y))<\delta_T$ for every $t\in\mathbb{R}$, and so, by the property of $\delta_T$, we obtain
$h(t+T)-h(t)\geq\tau_T$ for every $t\in\mathbb{R}$.
Define $\hat{h}: \mathbb{R}\to \mathbb{R}$ by $\hat{h}(nT)=h(nT)$ ($\forall n\in\mathbb{Z}$) and extend by linearity to $[nT,(n+1)T]$ for every $n\in\mathbb{Z}$.
Clearly $\hat{h}(0)=0$ and using the property of $\tau_T$ we see that $\hat{h}$ is an increasing homeomorphism.
Moreover, for every $t\in [nT,(n+1)T]$ (for some integer $n$) there is $t'\in [nT,(n+1)T]$ with $\hat{h}(t)=h(t')$ and so
$$
d(\phi_t(x),\phi_{\hat{h}(t)}(y))=d(\phi_t(x),\phi_{h(t')}(y))
\leq 
d(\phi_t(x),\phi_{t'}(x))+d(\phi_{t'}(x),\phi_{h(t')}(y))\leq
$$
$$
\left(\displaystyle\sup_{(z,u)\in X\times [0,T]}d(z,\phi_u(z))\right)+\delta<\frac{\alpha}{2}+\frac{\alpha}{2}=\alpha,
$$
proving $d(\phi_t(x),\phi_{\hat{h}(t)}(y))<\alpha$ for all $t\in\mathbb{R}$.
\end{proof}

\begin{cor}
\label{c1}
If $\phi$ is a continuous flow without singularities of a compact metric space, then for every
$\alpha>0$
there is $\delta>0$ such that
$\Gamma_\delta(x)\subset \Gamma_\alpha(y)$ for every $x,y\in X$ with $y\in \Gamma_\delta(x)$.
\end{cor}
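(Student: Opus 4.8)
The plan is to reduce the statement entirely to Lemma \ref{thomas1} together with a reparametrization argument. First I would unwind the definition of $\Gamma_\delta$: the condition $y\in\Gamma_\delta(x)$ means precisely that there is some $h\in\mathcal{B}$ with $d(\phi_t(x),\phi_{h(t)}(y))\le\delta$ for all $t\in\mathbb{R}$. So what must be shown is that, given $\alpha>0$, one can choose $\delta>0$ with the property that whenever $y\in\Gamma_\delta(x)$, every $z\in\Gamma_\delta(x)$ also belongs to $\Gamma_\alpha(y)$. The natural route is to pass from $x$ to $y$ by going through $z$, using that both $y$ and $z$ shadow the orbit of $x$ with small error.

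The concrete steps: apply Lemma \ref{thomas1} to $\frac{\alpha}{2}$ to obtain the corresponding $\delta>0$, and fix $x,y$ with $y\in\Gamma_\delta(x)$ together with an arbitrary $z\in\Gamma_\delta(x)$. Since both $y\in\Gamma_\delta(x)$ and $z\in\Gamma_\delta(x)$, Lemma \ref{thomas1} supplies increasing homeomorphisms $\hat{h}_y,\hat{h}_z\in\mathcal{B}$ with $d(\phi_t(x),\phi_{\hat{h}_y(t)}(y))<\frac{\alpha}{2}$ and $d(\phi_t(x),\phi_{\hat{h}_z(t)}(z))<\frac{\alpha}{2}$ for every $t\in\mathbb{R}$. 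A triangle inequality then yields, for all $t$,
$$
d(\phi_{\hat{h}_y(t)}(y),\phi_{\hat{h}_z(t)}(z))\leq d(\phi_{\hat{h}_y(t)}(y),\phi_t(x))+d(\phi_t(x),\phi_{\hat{h}_z(t)}(z))<\alpha.
$$

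It remains to reparametrize so that $y$ plays the role of base point. Because $\hat{h}_y$ is an increasing homeomorphism of $\mathbb{R}$, the substitution $s=\hat{h}_y(t)$ (so $t=\hat{h}_y^{-1}(s)$) transforms the previous estimate into
$$
d(\phi_s(y),\phi_{g(s)}(z))<\alpha,\qquad g:=\hat{h}_z\circ\hat{h}_y^{-1},\ \ \forall s\in\mathbb{R}.
$$
Here $g$ is a composition of increasing homeomorphisms, hence an increasing homeomorphism, and $g(0)=\hat{h}_z(\hat{h}_y^{-1}(0))=\hat{h}_z(0)=0$, so $g\in\mathcal{B}$. This is exactly the statement $z\in\Gamma_\alpha(y)$, and since $z\in\Gamma_\delta(x)$ was arbitrary we conclude $\Gamma_\delta(x)\subset\Gamma_\alpha(y)$.

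The step that genuinely requires the hypotheses is the inversion in the reparametrization: the bare definition of $\Gamma_\delta$ only provides a continuous $h\in\mathcal{B}$, which need not be injective, so one cannot directly form $\hat{h}_z\circ h^{-1}$. This is precisely why I would invoke Lemma \ref{thomas1}—resting on the absence of singularities and compactness through the Bowen--Walters estimate $h(t+T)-h(t)\geq\tau_T$—to upgrade the continuous shadowing maps to genuine increasing homeomorphisms before composing them. Thus the only real obstacle is already discharged by Lemma \ref{thomas1}, and the rest is the triangle inequality and a change of variables.
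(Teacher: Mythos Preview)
Your proof is correct and follows essentially the same approach as the paper's: apply Lemma~\ref{thomas1} with $\alpha/2$ to obtain increasing homeomorphisms for both $y$ and $z$, combine via the triangle inequality, then reparametrize by inverting the homeomorphism associated to $y$. Your explicit remark on why the upgrade to increasing homeomorphisms is needed (so that the inverse exists for the reparametrization) is a useful clarification that the paper leaves implicit.
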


\begin{proof}
Fix $\alpha>0$ and let $\delta$ be as in Lemma \ref{thomas1} for $\frac{\alpha}{2}$.
By Lemma \ref{thomas1} if $y\in \Gamma_\delta(x)$ and $z\in \Gamma_\delta(x)$ there are increasing homeomorphisms
$\hat{h},\tilde{h}\in \mathcal{B}$ such that
$d(\phi_t(x),\phi_{\hat{h}(t)}(y))\leq \frac{\alpha}{2}$ and $d(\phi_t(x),\phi_{\tilde{h}(t)}(z))\leq\frac{\alpha}{2}$
for every $t\in \mathbb{R}$.
It follows that
$d(\phi_{\hat{h}(t)}(y),\phi_{\tilde{h}(t)}(z))\leq\alpha$ for all $t\in \mathbb{R}$.
Replacing $t$ by $\hat{h}^{-1}(t)$ in this relation we obtain
$$
d\left(\phi_t(y),\phi_{(\tilde{h}\circ \hat{h}^{-1})(t))}(z)\right)\leq\alpha,
\quad\quad\forall t\in \mathbb{R}.
$$
As $\tilde{h}\circ \hat{h}^{-1}\in\mathcal{B}$ we conclude that $z\in \Gamma_\alpha(y)$ and the proof follows.
\end{proof}

Combining the proof of theorems 3 and 5 in \cite{bw} we obtain the following lemma.
In its proof we denote by $[r]$ the integer part of $r\in \mathbb{R}$.

\begin{lemma}
\label{period}
Let $\phi$ be a continuous flow on a compact metric space $X$.
Then, for every $\delta>0$ and $t>0$ there are $0<\alpha_0<\min\{\delta,2t\}$ and $0<\beta_0<\min\{\delta,2t\}$ such that
if $x,y\in X$ satisfy $\phi_a(x)=x$, $\phi_b(y)=y$ for some
$a,b\in \left[t-\frac{\alpha}{2},t+\frac{\alpha}{2}\right]$ and $d(x,y)\leq \beta$ with $0<\alpha<\alpha_0$ and $0<\beta<\beta_0$,
then $y\in \Gamma_\delta(x)$.
\end{lemma}

\begin{proof}
Fix $\delta>0$ and $t>0$.

Take $0<\beta_0<\min\{\delta,2t\}$ such that
if $z,w\in X$ satisfy
$d(z,w)\leq \beta$ for some $0<\beta<\beta_0$, then
\begin{equation}
\label{beta}
d(\phi_s(z),\phi_s(w))\leq \frac{\delta}{2},
\quad\quad\forall 0\leq s\leq t+1.
\end{equation}
Take $0<\alpha_0<\min\{\delta,2t\}$ such that if $0<\alpha<\alpha_0$ then 
\begin{equation}
\label{ll}
\max\left\{\alpha,2\left(\displaystyle\sup_{(z,u)\in X\times [0,\alpha]}d(\phi_u(z),z)\right)\right\}<\min\left\{\frac{\delta}{2},1\right\}.
\end{equation}

Fix $x,y\in X$ satisfying $\phi_a(x)=x$, $\phi_b(y)=y$ for some
$a,b\in \left[t-\frac{\alpha}{2},t+\frac{\alpha}{2}\right]$ and $d(x,y)\leq \beta$ with $0<\alpha<\alpha_0$ and $0<\beta<\beta_0$.

Put $m=\left[\frac{t-\frac{\alpha}{2}}{\alpha}\right]+1$ and define
$$
t_{pm+q}=pa+q\alpha\quad\mbox{ and }\quad u_{pm+q}=pb+q\alpha,
\quad\forall (p,q)\in \mathbb{Z}\times \{0,\cdots, m-1\}.
$$
Direct computations show $t_{pm+q+1}-t_{pm+q}=\alpha$ (for $(p,q)\in \mathbb{Z}\times \{0,\cdots, m-2\}$),
$t_{(p+1)m}-t_{pm+m-1}\geq0$ (for $p\in\mathbb{Z}$) and analogously replacing $t$ by $u$. Moreover,
\begin{equation}
\label{eq2}
|t_{pm+q+1}-t_{pm+q}|\leq\alpha \quad\mbox{ and }\quad
|u_{pm+q+1}-u_{pm+q}|\leq\alpha
\end{equation}
for every $(p,q)\in \mathbb{Z}\times \{0,\cdots, m-1\}$.
Also
$$
d(\phi_{t_{pm+q}}(x),\phi_{u_{pm+q}}(y))=d(\phi_{q\alpha}(x),\phi_{q\alpha}(y))
$$
and
$$
0\leq q\alpha<m\alpha=\left[\frac{t-\frac{\alpha}{2}}{\alpha}\right]\alpha+\alpha
=t-\frac{\alpha}{2}-r+\alpha
$$
(for some $0\leq r<\alpha$) so
$$
0\leq q\alpha<t+\frac{\alpha}{2}\leq t+1, \quad\quad\forall q=1,\cdots, m-1.
$$
Since $d(x,y)\leq \beta$ we can put $s=q\alpha$ in (\ref{beta}) to obtain $d(\phi_{q\alpha}(x),\phi_{q\alpha}(y))\leq\frac{\delta}{2}$ so
\begin{equation}
\label{eq3}
d(\phi_{t_{pm+q}}(x),\phi_{u_{pm+q}}(y))\leq\frac{\delta}{2},
\quad\quad\forall (p,q)\in \mathbb{Z}\times \{0,\cdots, m-1\}.
\end{equation}
Now define $h: \mathbb{R}\to \mathbb{R}$ by setting $h(t_{pm+q})=u_{pm+q}$ and extending linearly
to $[t_{pm+q},t_{pm+q+1}]$, $\forall (p,q)\in \mathbb{Z}\times \{0,\cdots, m-1\}$. Clearly $h\in\mathcal{B}$. Moreover,
if $s\in \mathbb{R}$ then $s\in [t_{pm+q},t_{pm+q+1}[$ for unique $(p,q)\in \mathbb{Z}\times\{0,\cdots, m-1\}$, so
$$
d(\phi_s(x),\phi_{h(s)})\leq d(\phi_s(x),\phi_{t_{pm+q}}(x))+
d(\phi_{t_{pm+q}}(x),\phi_{u_{pm+q}}(y))+
$$
$$
d(\phi_{u_{pm+q}}(y),\phi_{h(s)}(y))
\overset{(\ref{eq2})}{\leq} 
2\left(\displaystyle\sup_{(z,u)\in X\times [0,\alpha]}d(\phi_u(z),z)\right)+d(\phi_{t_{pm+q}}(x),\phi_{u_{pm+q}}(y))
\overset{(\ref{ll}),(\ref{eq3})}{\displaystyle\leq}
$$
$$
\frac{\delta}{2}+\frac{\delta}{2}=\delta.
$$
We conclude that there is $h\in \mathcal{B}$ satisfying
$$
d(\phi_s(x),\phi_{h(s)}(y))<\delta,
\quad\quad\forall s\in \mathbb{R},
$$
so $y\in \Gamma_\delta(x)$.
\end{proof}

We also need a lemma about the localization of the periods of the periodic points.
Precisely, for every flow $\phi$ and every $x\in Per(\phi)$ we denote by $t_x=\min\{t>0:\phi_t(x)=x\}$.
Given $I\subset \mathbb{R}$ we denote by $Per_I(\phi)=\{x\in Per(\phi):t_x\in I\}$.
The special case for $I=[0,T]$, $T\geq 0$, will be denoted by $Per_T(\phi)$.
In particular, $Sing(\phi)=Per_0(\phi)$.

\begin{cor}
\label{c2}
Let $\phi$ be a continuous flow on a compact metric space $X$.
Then, for every $\delta>0$, $N\in \mathbb{N}^+$ and $x\in Per(\phi)\setminus Sing(\phi)$
there are $0<\alpha_*<\min\{\delta,2t_x\}$ and $0<\beta_*<\min\{\delta,2t_x\}$ such that
$$
B[x,\beta]\cap
\left(
\displaystyle\bigcup_{k=1}^NPer_{[kt_x-\frac{\alpha}{2},kt_x+\frac{\alpha}{2}]}(\phi)
\right)
\subset\Gamma_\delta(x),
\quad\quad\forall 0<\alpha<\alpha_*, \forall 0<\beta<\beta_*.
$$
\end{cor}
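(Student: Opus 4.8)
The plan is to derive this corollary directly from Lemma \ref{period} by applying it once for each $k\in\{1,\dots,N\}$ and then taking the smallest of the resulting constants. The essential observation is that although $t_x$ is the \emph{minimal} period of $x$, every multiple $kt_x$ is a period in the weaker sense used by Lemma \ref{period}: since $\phi_{t_x}(x)=x$ we get $\phi_{kt_x}(x)=x$ for all $k\in\mathbb{N}^+$. This is precisely what lets us set $t=kt_x$ in the lemma and treat $a=kt_x$ as a period of $x$, even though it is not minimal.

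First I would fix $\delta>0$, $N\in\mathbb{N}^+$ and $x\in Per(\phi)\setminus Sing(\phi)$, so that $t_x>0$. For each $k\in\{1,\dots,N\}$ I apply Lemma \ref{period} with $t=kt_x$ to obtain constants $0<\alpha_0^{(k)}<\min\{\delta,2kt_x\}$ and $0<\beta_0^{(k)}<\min\{\delta,2kt_x\}$ enjoying the property stated there. I then define
$$
\alpha_*=\min_{1\le k\le N}\alpha_0^{(k)}
\qquad\text{and}\qquad
\beta_*=\min_{1\le k\le N}\beta_0^{(k)},
$$
both positive, being minima of finitely many positive numbers. The required bounds $\alpha_*<\min\{\delta,2t_x\}$ and $\beta_*<\min\{\delta,2t_x\}$ come for free from the case $k=1$, since $\alpha_*\le\alpha_0^{(1)}<\min\{\delta,2t_x\}$ and likewise for $\beta_*$.

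Next, given $0<\alpha<\alpha_*$ and $0<\beta<\beta_*$, I would take an arbitrary point $y$ in the left-hand set. By definition $y$ is periodic with minimal period $t_y\in[kt_x-\frac{\alpha}{2},kt_x+\frac{\alpha}{2}]$ for some $k\in\{1,\dots,N\}$, and $d(x,y)\le\beta$. For this $k$ I set $a=kt_x$ and $b=t_y$. Then $\phi_a(x)=x$, $\phi_b(y)=y$, and both $a$ and $b$ lie in $[kt_x-\frac{\alpha}{2},kt_x+\frac{\alpha}{2}]$ (the value $a=kt_x$ being the midpoint and $b=t_y$ by hypothesis). Since $0<\alpha<\alpha_*\le\alpha_0^{(k)}$ and $0<\beta<\beta_*\le\beta_0^{(k)}$, Lemma \ref{period} applied with $t=kt_x$ yields $y\in\Gamma_\delta(x)$. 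As $y$ was arbitrary, this gives the desired inclusion.

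The only point that demands care is the uniformity: Lemma \ref{period} produces constants depending on the chosen value $t=kt_x$, so one must pass to the common refinement $\alpha_*,\beta_*$ over the finitely many $k$ and verify that the closed-ball and period conditions match the intervals appearing in the union. I do not expect a genuine obstacle here, since the finiteness of $N$ makes the minimum well behaved and the identity $\phi_{kt_x}(x)=x$ removes any difficulty in checking the hypotheses of Lemma \ref{period} on the point $x$.
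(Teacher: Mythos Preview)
Your proposal is correct and follows essentially the same route as the paper: apply Lemma \ref{period} with $t=kt_x$ for each $k=1,\dots,N$, take $\alpha_*$ and $\beta_*$ as the minima of the resulting constants, and verify the hypotheses for an arbitrary $y$ in the left-hand side with $a=kt_x$, $b=t_y$. Your justification that $\alpha_*,\beta_*<\min\{\delta,2t_x\}$ via the $k=1$ case is the same observation the paper leaves implicit.
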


\begin{proof}
Fix $\delta>0$, $N\in\mathbb{N}^+$ and $x\in Per(\phi)\setminus Sing(\phi)$ (thus $t_x>0$).
Given $k=1,\cdots, N$ we put $t=kt_x$ in Lemma \ref{period} to obtain the corresponding numbers
$0<\alpha_k<\min\{\delta,2kt_x\}$ and $0<\beta_k<\min\{\delta,2kt_x\}$.
Choose
$$
\alpha_*=\min_{1\leq k\leq N}\alpha_k \quad\mbox{ and }\quad \beta_*=\min_{1\leq k\leq N}\beta_k.
$$
Clearly $0<\alpha_*<\min\{\delta,2t_x\}$ and $0<\beta_*<\min\{\delta,2t_x\}$.
Now fix $0<\alpha<\alpha_*$ and $0<\beta<\beta_*$. Take
$y\in B[x,\beta]\cap Per_{[kt_x-\frac{\alpha}{2},kt_x+\frac{\alpha}{2}]}(\phi)$ for some $k=1,\cdots, N$.
Putting $a=kt_x$ and $b=t_y$ we get $\phi_a(x)=x$ and $\phi_b(y)=y$ for some
$a,b\in [kt_x-\frac{\alpha}{2},kt_x+\frac{\alpha}{2}]$.
Since $0<\alpha<\alpha_*\leq \alpha_k$ and $0<\beta<\beta_*\leq \beta_k$ we can put $t=kt_x$ in Lemma \ref{period} to obtain
$y\in \Gamma_\delta(x)$. This ends the proof.
\end{proof}

Another localization lemma for periodic points is given below.

\begin{lemma}
\label{localiza}
Let $\phi$ be a continuous flow of a compact metric space.
Then, for every $T>0$, $x\in Per_T(\phi)\setminus Sing(\phi)$ and $0<\alpha<t_x$ there is $\beta_0>0$ such that
$$
B[x,\beta]\cap Per_T(\phi)\subset \displaystyle\bigcup_{k=1}^{\left[\frac{T}{t_x}\right]}Per_{[kt_x-\frac{\alpha}{2},kt_x+\frac{\alpha}{2}]}(\phi),
\quad\quad\forall 0<\beta<\beta_0.
$$
\end{lemma}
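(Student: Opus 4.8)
The plan is to argue by contradiction, converting the localization statement into a statement about limits of periods of periodic points accumulating at $x$. The crucial observation I will exploit is that such limits must be \emph{exact} nonnegative multiples of $t_x$, which both pins the admissible range of $k$ and dissolves the apparent boundary difficulties one meets when trying to bound the period of a single nearby point directly.

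First I would suppose the conclusion fails. Then for every $n\in\mathbb{N}$ there is a point $y_n\in B[x,\frac1n]\cap Per_T(\phi)$ lying outside $\bigcup_{k=1}^{[T/t_x]}Per_{[kt_x-\frac{\alpha}{2},kt_x+\frac{\alpha}{2}]}(\phi)$; that is, $t_{y_n}\le T$ while $|t_{y_n}-kt_x|>\frac{\alpha}{2}$ for every $k\in\{1,\dots,[T/t_x]\}$. By construction $y_n\to x$, and since $t_{y_n}\in[0,T]$ I may pass to a subsequence with $t_{y_n}\to L$ for some $L\in[0,T]$. Using the joint continuity of the flow together with the periodicity relation $\phi_{t_{y_n}}(y_n)=y_n$, I would let $n\to\infty$ in $(t_{y_n},y_n)\to(L,x)$ to obtain $\phi_L(x)=\lim_n\phi_{t_{y_n}}(y_n)=\lim_n y_n=x$.

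It then remains to rule out $L=0$ and to identify $L$ otherwise. If $L=0$, then $t_{y_n}\to0$, and for an arbitrary $s>0$ I would choose integers $k_n=[s/t_{y_n}]$ so that $k_n t_{y_n}\to s$; since each $y_n$ has period $t_{y_n}$ we have $\phi_{k_n t_{y_n}}(y_n)=y_n$, and continuity gives $\phi_s(x)=\lim_n\phi_{k_n t_{y_n}}(y_n)=x$. As $s>0$ is arbitrary this forces $\phi_t(x)=x$ for all $t$, i.e. $x\in Sing(\phi)$, contradicting the hypothesis $x\notin Sing(\phi)$. Hence $L>0$. Since $t_x$ is the minimal period of $x$, the set $\{t:\phi_t(x)=x\}$ is the discrete group $t_x\mathbb{Z}$, so $\phi_L(x)=x$ with $L>0$ yields $L=mt_x$ for some integer $m\ge1$; and $L\le T$ forces $m\le T/t_x$, hence $1\le m\le[T/t_x]$. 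But then $t_{y_n}\to mt_x$ gives $|t_{y_n}-mt_x|\le\frac{\alpha}{2}$ for all large $n$, i.e. $y_n\in Per_{[mt_x-\frac{\alpha}{2},mt_x+\frac{\alpha}{2}]}(\phi)$, contradicting the choice of $y_n$. This contradiction produces the desired $\beta_0$.

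I expect the main obstacle to be the case $L=0$: ruling it out is exactly the statement that periods of periodic points cannot shrink to zero near a non-singular point, and it is the only place where the full strength of joint continuity of the flow (approximating the time $s$ by multiples $k_n t_{y_n}$ of a vanishing period) is used. I also emphasize that working with the exact limit $L$ rather than estimating $t_{y_n}$ for a fixed $n$ is what delivers the precise range $1\le m\le[T/t_x]$: a direct estimate would only give $mt_x\le t_{y_n}+\frac{\alpha}{2}\le T+\frac{\alpha}{2}$ and thus risk the spurious value $m=[T/t_x]+1$, whereas $L=\lim_n t_{y_n}\le T$ rules this out cleanly.
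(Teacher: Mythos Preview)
Your proof is correct and follows essentially the same contradiction argument as the paper's: extract a convergent subsequence of periods, identify the limit as a positive multiple $mt_x$ with $1\le m\le[T/t_x]$, and reach a contradiction. Your treatment of the case $L=0$ is in fact more thorough than the paper's, which simply asserts ``since $x\notin Sing(\phi)$ and $x_n\to x$ we have $t_\infty>0$'' without the explicit $k_n=[s/t_{y_n}]$ argument you supply.
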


\begin{proof}
Otherwise there are $T>0$, $x\in Per_T(\phi)\setminus Sing(\phi)$ and $0<\alpha<t_x$ such that
\begin{equation}
\label{eq5}
x_n\not\in
\displaystyle\bigcup_{k=1}^{\left[\frac{T}{t_x}\right]}Per_{[kt_x-\alpha,kt_x+\alpha]}(\phi),
\end{equation}
for some sequence $x_n\in Per_T(\phi)$ with $x_n\to x$. Clearly $t_{x_n}\in [0,T]$ for all $n$ so we can assume that
$t_{x_n}\to t_\infty$ for some $t_\infty\in [0,T]$. Since $x\notin Sing(\phi)$ and $x_n\to x$ we have $t_\infty>0$.
Moreover, by noting that $\phi_{t_{x_n}}(x_n)=x_n\to x$ and $\phi_{t_{x_n}}(x_n)\to \phi_{t_\infty}(x)$ we obtain
$\phi_{t_\infty}(x)=x$ thus $t_\infty=kt_x$ for some $1\leq k\leq \left[\frac{T}{t_x}\right]$.
It follows that $x_n\in Per_{[kt_x-\alpha,kt_x+\alpha]}(\phi)$ for $n$ large contradicting (\ref{eq5}).
\end{proof}

We shall apply the last two results to prove the following.

\begin{lemma}
\label{p1}
If $\mu$ is an expansive measure of a continuous flow $\phi$ on a compact metric space $X$,
then $\mu(Per_T(\phi))=0$ for every $T\geq 0$.
\end{lemma}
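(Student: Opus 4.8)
The plan is to split $Per_T(\phi)$ into its singular part $Sing(\phi)=Per_0(\phi)$ and its strictly periodic part $Per_T(\phi)\setminus Sing(\phi)$, and to show that each is $\mu$-null in the sense of the paper's convention. For the singular part I would invoke Lemma \ref{sing-supp}: since $Sing(\phi)\cap supp(\mu)=\emptyset$ we have $Sing(\phi)\subset X\setminus supp(\mu)$, and as $X$ is separable the complement of the support is a null set (cover it by countably many null open balls), whence $\mu(Sing(\phi))=0$. Note that $Sing(\phi)$ is closed, hence Borel, which will matter at the end. This already settles the case $T=0$, where $Per_0(\phi)=Sing(\phi)$.

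For the strictly periodic part, fix $T>0$ and let $\delta>0$ be an expansivity constant of $\mu$. I would show that every $x\in Per_T(\phi)\setminus Sing(\phi)$ admits a ball on which $Per_T(\phi)$ is trapped inside the null set $\Gamma_\delta(x)$. Concretely, put $N=\left[\frac{T}{t_x}\right]\geq 1$ and apply Corollary \ref{c2} with the data $\delta,N,x$ to obtain $\alpha_*,\beta_*>0$ such that
$$ B[x,\beta]\cap\bigcup_{k=1}^{N}Per_{[kt_x-\frac{\alpha}{2},kt_x+\frac{\alpha}{2}]}(\phi)\subset\Gamma_\delta(x) $$
for all $0<\alpha<\alpha_*$ and $0<\beta<\beta_*$. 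Then I would fix a single $\alpha$ with $0<\alpha<\min\{\alpha_*,t_x\}$ and feed it into Lemma \ref{localiza}, producing $\beta_0>0$ with
$$ B[x,\beta]\cap Per_T(\phi)\subset\bigcup_{k=1}^{\left[\frac{T}{t_x}\right]}Per_{[kt_x-\frac{\alpha}{2},kt_x+\frac{\alpha}{2}]}(\phi),\quad 0<\beta<\beta_0; $$
the upper limit $\left[\frac{T}{t_x}\right]$ of that union coincides with $N$, which is precisely why the two statements can be chained. Choosing $\beta_x<\min\{\beta_*,\beta_0\}$ and combining the two inclusions yields $B[x,\beta_x]\cap Per_T(\phi)\subset\Gamma_\delta(x)$, so every Borel subset of $B(x,\beta_x)\cap Per_T(\phi)$ is null.

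I would then globalize via a Lindel\"of (separability) argument. The open balls $\{B(x,\beta_x):x\in Per_T(\phi)\setminus Sing(\phi)\}$ cover $Per_T(\phi)\setminus Sing(\phi)$, so there is a countable subcover $\{B(x_i,\beta_{x_i})\}_{i\in\mathbb{N}}$. Given any Borel set $A\subset Per_T(\phi)\setminus Sing(\phi)$, the pieces $A_i=A\cap B(x_i,\beta_{x_i})$ are Borel and satisfy $A_i\subset\Gamma_\delta(x_i)$, hence $\mu(A_i)=0$; since $A\subset\bigcup_i A_i$, countable subadditivity gives $\mu(A)=0$. As $A$ was an arbitrary Borel subset, $\mu(Per_T(\phi)\setminus Sing(\phi))=0$ in the paper's sense. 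Finally, for an arbitrary Borel $A\subset Per_T(\phi)$ I would write $A=(A\setminus Sing(\phi))\cup(A\cap Sing(\phi))$, both Borel because $Sing(\phi)$ is closed, and conclude $\mu(A)=0$ from the two null statements above, giving $\mu(Per_T(\phi))=0$.

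I expect the only genuinely delicate point to be the matching of parameters between Corollary \ref{c2} and Lemma \ref{localiza}: selecting one $\alpha$ that is simultaneously admissible for both, i.e. $0<\alpha<\min\{\alpha_*,t_x\}$, and observing that the index ranges of the two unions agree at $N=\left[\frac{T}{t_x}\right]$. Everything else is the routine reduction of an inner-null statement to countably many null sets $\Gamma_\delta(x_i)$ through the Lindel\"of property, and the conceptual content -- that nearby periodic points of period at most $T$ all fall into $\Gamma_\delta(x)$ -- is already packaged in those two lemmas.
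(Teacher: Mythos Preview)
Your proof is correct and shares the same core with the paper's: both chain Corollary \ref{c2} and Lemma \ref{localiza} with $N=\left[\frac{T}{t_x}\right]$ to obtain, for each non-singular $x\in Per_T(\phi)$, a radius $\beta_x$ with $B[x,\beta_x]\cap Per_T(\phi)\subset\Gamma_\delta(x)$. The difference lies only in the globalization step. The paper argues by contradiction: it first observes that $Per_T(\phi)$ is compact, so if $\mu(Per_T(\phi))>0$ there is a single point $x_*\in Per_T(\phi)$ with $\mu(B[x_*,\beta]\cap Per_T(\phi))>0$ for all small $\beta$; this forces $x_*\in supp(\mu)$, hence $x_*\notin Sing(\phi)$ by Lemma \ref{sing-supp}, and then the local inclusion at $x_*$ gives the contradiction directly. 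You instead split off $Sing(\phi)$ by hand and cover the rest via a countable Lindel\"of subcover. Both routes are standard; yours is a touch more careful in explicitly requiring $\alpha<\min\{\alpha_*,t_x\}$ so that the hypothesis $0<\alpha<t_x$ of Lemma \ref{localiza} is visibly satisfied, whereas the paper's choice $0<\alpha<\alpha_*$ (with $\alpha_*<2t_{x_*}$) leaves this implicit.
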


\begin{proof}
First notice that since $\phi$ is continuous and $X$ compact we have that $Per_T(\phi)$ is compact (and so a Borelian) for all $T\geq 0$.
Now, suppose by contradiction that there is $T\geq0$ satisfying
$\mu(Per_T(\phi))>0$.
Since $Per_T(\phi)$ is compact we can arrange $x_*\in Per_T(\phi)$ and $\beta_->0$ such that
\begin{equation}
\label{eq6}
\mu(B[x_*,\beta]\cap Per_T(\phi))>0,
\quad\quad\forall 0<\beta<\beta_-.
\end{equation}
In particular, $x_*\in supp(\mu)$ so $x_*\not\in Sing(\phi)$ by Lemma \ref{sing-supp} thus $T>0$.

Now fix and expansivity constant $\delta$ of $\mu$ and take $N=\left[\frac{T}{t_{x_*}}\right]$.
Let $\alpha_*$ and $\beta_*$ be the corresponding numbers for $x=x_*$ in Corollary \ref{c2}.
We also fix $0<\alpha<\alpha_*$ which together with $x=x_*$ and $T$ yields the constant $\beta_0$ as in Lemma \ref{localiza}.
Fix
$$
0<\beta<\min\{\beta_0,\beta_-,\beta_*\}.
$$
On the one hand,
applying Lemma \ref{localiza} we obtain
$$
B[x_*,\beta]\cap Per_T(\phi)\subset \displaystyle\bigcup_{k=1}^{\left[\frac{T}{t_x}\right]}Per_{[kt_{x_*}-\frac{\alpha}{2},kt_{x_*}+\frac{\alpha}{2}]}(\phi),
$$
so
$$
B[x_*,\beta]\cap Per_T(\phi)\subset B[x_*,\beta]\cap\left(\displaystyle\bigcup_{k=1}^{\left[\frac{T}{t_x}\right]}Per_{[kt_{x_*}-\frac{\alpha}{2},kt_{x_*}+\frac{\alpha}{2}]}(\phi)\right).
$$
On the other hand, since $0<\alpha<\alpha_*$ we can take $N=\left[\frac{T}{t_{x_*}}\right]$ in Corollary \ref{c2} to obtain
$$
B[x_*,\beta]\cap
\left(
\displaystyle\bigcup_{k=1}^{\left[\frac{T}{t_x}\right]}Per_{[kt_{x_*}-\frac{\alpha}{2},kt_{x_*}+\frac{\alpha}{2}]}(\phi)
\right)
\subset\Gamma_\delta(x_*).
$$
Putting all this together we obtain
$$
\mu(B[x_*,\beta]\cap Per_T(\phi))\leq \mu\left(B[x_*,\beta]\cap
\left(\displaystyle\bigcup_{k=1}^{\left[\frac{T}{t_{x_*}}\right]}Per_{[kt_{x_*}-\frac{\alpha}{2},kt_{x_*}+\frac{\alpha}{2}]}(\phi)
\right)\right)\leq
$$
$$
\mu(\Gamma_\delta(x_*))=0.
$$
Since we have chosen $0<\beta<\beta_-$ we obtain a contradiction by (\ref{eq6}).
\end{proof}

The following result establishes a relation between the expansivity of measures for flows $\phi$
and for its corresponding time $T$-maps $\phi_T$.

\begin{lemma}
\label{lele}
Let $\phi$ be a continuous flow of a compact metric space $X$.
Then, for every $\delta>0$ and $T\in \mathbb{R}\setminus\{0\}$ there is $\alpha>0$ such that
$\hat{\Gamma}_\alpha^{\Phi_T}(x)\subset \Gamma_\delta(x)$ for every $x\in X$.
\end{lemma}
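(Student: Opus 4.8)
The plan is to show that the \emph{identity} reparametrization already witnesses membership in $\Gamma_\delta(x)$, so that the entire content of the lemma reduces to upgrading control of the two orbits along the integer times $nT$ to control along all real times. First I would reduce to the case $T>0$: since the defining condition of $\hat{\Gamma}_\alpha^{\phi_T}(x)$ ranges over all $n\in\mathbb{Z}$ and $\{nT:n\in\mathbb{Z}\}=\{n|T|:n\in\mathbb{Z}\}$, one has $\hat{\Gamma}_\alpha^{\phi_T}(x)=\hat{\Gamma}_\alpha^{\phi_{|T|}}(x)$, so it suffices to treat $T>0$.

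The key step is equicontinuity. Because $\phi:\mathbb{R}\times X\to X$ is continuous and $[0,T]\times X$ is compact, $\phi$ is uniformly continuous there; in particular the family $\{\phi_s:s\in[0,T]\}$ is equicontinuous. Thus, given $\delta>0$, I would choose $\alpha>0$ such that $d(z,w)\leq\alpha$ implies $d(\phi_s(z),\phi_s(w))\leq\delta$ for every $z,w\in X$ and every $s\in[0,T]$. This $\alpha$ is the constant claimed by the lemma.

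Next I would carry out the propagation. Fix $x\in X$ and $y\in\hat{\Gamma}_\alpha^{\phi_T}(x)$, so that $d(\phi_{nT}(x),\phi_{nT}(y))\leq\alpha$ for all $n\in\mathbb{Z}$. Given an arbitrary $t\in\mathbb{R}$, write $t=nT+s$ with $n=\lfloor t/T\rfloor\in\mathbb{Z}$ and $s\in[0,T)$. The flow property gives $\phi_t(x)=\phi_s(\phi_{nT}(x))$ and $\phi_t(y)=\phi_s(\phi_{nT}(y))$, whence the choice of $\alpha$ yields $d(\phi_t(x),\phi_t(y))\leq\delta$. Since $t$ was arbitrary and the identity map $h(t)=t$ lies in $\mathcal{B}$, this says precisely that $\phi_{h(t)}(y)\in B[\phi_t(x),\delta]$ for every $t$, i.e. $y\in\bigcap_{t\in\mathbb{R}}\phi_{-h(t)}(B[\phi_t(x),\delta])\subset\Gamma_\delta(x)$ with $h=\mathrm{id}$. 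As $x$ and $y$ were arbitrary, this gives $\hat{\Gamma}_\alpha^{\phi_T}(x)\subset\Gamma_\delta(x)$ for every $x\in X$.

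I do not expect a genuine obstacle here: the statement is a soft consequence of uniform continuity, and the only point requiring care is recognizing that no nontrivial reparametrization is needed---the identity in $\mathcal{B}$ already suffices---so that the integer-time hypothesis coming from the time-$T$ map $\phi_T$ transfers verbatim to continuous time through equicontinuity of $\{\phi_s:s\in[0,T]\}$.
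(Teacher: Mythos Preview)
Your proposal is correct and follows essentially the same approach as the paper's own proof: choose $\alpha$ by uniform continuity of $\phi$ on $[0,T]\times X$, decompose an arbitrary $t$ as $nT+s$ with $s\in[0,T)$, and use the identity reparametrization $h=\mathrm{id}\in\mathcal{B}$ to conclude. Your version is in fact slightly more careful, since you explicitly reduce to $T>0$ (the paper writes the interval $[0,T]$ and the inequality $nT\le t<(n+1)T$ as if $T>0$ without comment).
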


\begin{proof}
Take $\alpha>0$ satisfying
\begin{equation}
\label{leco}
z,w\in X\mbox{ and }d(z,w)\leq \alpha\mbox{ implies }d(\phi_s(z),\phi_s(w))\leq \delta,
\quad\forall s\in [0,T].
\end{equation}
Take also $x,y\in X$ with $y\in \hat{\Gamma}_\alpha^{\phi_{nT}}(x)$, i.e.,
\begin{equation}
\label{lecoo}
d(\phi_{nT}(x),\phi_{nT}(y))\leq\alpha,
\quad\quad\forall n\in \mathbb{Z}.
\end{equation}
Since $T\neq0$
every $t\in \mathbb{R}$ satisfies $nT\leq t<(n+1)T$ for a unique $n\in \mathbb{Z}$ therefore
$$
d(\phi_t(x),\phi_t(y))=d(\phi_{t-nT}(\phi_{nT}(x)),\phi_{t-nT}(\phi_{nT}(y))
=d(\phi_s(z),\phi_s(w))\leq \delta
$$
by applying (\ref{leco}) and (\ref{lecoo}) to $z=\phi_{nT}(x)$, $w=\phi_{nT}(y)$ and $s=t-nT\in [0,T]$.

We conclude that there is
$h\in \mathcal{B}$ (i.e. the identity) satisfying
$$
d(\phi_t(x),\phi_{h(t)}(y))\leq \delta,
\quad\quad\forall t\in \mathbb{R},
$$
so $y\in \Gamma_\delta(x)$.
\end{proof}

Next we study the interplay between flow-equivalence and expansive measure for flows.
Clearly if $f: X\to Y$ is an equivalence between the flows $\phi$ and $\psi$ on $X$ and $Y$ respectively, then
$$
f(Sing(\phi))=Sing(\psi).
$$
Furthermore, if $\hat{f}(\psi)$ is the flow on $X$ defined by
$$
\hat{f}(\psi)(x,t)=f^{-1}(\psi(f(x),t)),
$$
then for every $x\in X$ there is a homeomorphism
$h_x\in \mathcal{B}$ satisfying
\begin{equation}
\label{paja}
\hat{f}(\psi)_t(x)=\phi_{h_x(t)}(x), \quad\quad\forall t\in \mathbb{R}.
\end{equation}
Indeed, for $x\not\in Sing(\phi)$ the existence of such an $h_x$ was proved in \cite{bw}
whereas for $x\in Sing(\phi)$ we can take $h_x(t)=t$.
We shall use this in the following lemma.

\begin{lemma}
\label{mane}
Let $f: X\to Y$ be an equivalence between continuous flows $\phi$ on $X$ and $\psi$ on $Y$, where $X$ and $Y$ are compact metric spaces.
Then, for every $\delta>0$ there is $\alpha>0$ such that
$$
f^{-1}(\Gamma_{\alpha,\psi}(z))\subset \Gamma_{\delta,\phi}(f^{-1}(z)),
\quad\quad\forall z\in Y.
$$
\end{lemma}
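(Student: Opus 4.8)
The plan is to reduce membership in $\Gamma_{\alpha,\psi}$ on $Y$ to membership in $\Gamma_{\delta,\phi}$ on $X$ by transporting the defining inequality through $f^{-1}$ and then absorbing the orbit-equivalence time changes supplied by (\ref{paja}). Write $x=f^{-1}(z)$ and $y=f^{-1}(w)$ and suppose $w\in \Gamma_{\alpha,\psi}(z)$; the goal is to produce some $g\in\mathcal{B}$ with $d(\phi_t(x),\phi_{g(t)}(y))\le\delta$ for all $t$. Since $X$ and $Y$ are compact and $f$ is a homeomorphism, $f^{-1}$ is uniformly continuous, so given $\delta>0$ I first choose $\alpha>0$ such that $d(p,q)\le\alpha$ implies $d(f^{-1}(p),f^{-1}(q))\le\delta$.

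By definition of $\Gamma_{\alpha,\psi}(z)$ there is $h\in\mathcal{B}$ with $d(\psi_t(z),\psi_{h(t)}(w))\le\alpha$ for every $t\in\mathbb{R}$. Applying the uniform continuity of $f^{-1}$ (and recalling $z=f(x)$, $w=f(y)$) gives
$$
d\big(f^{-1}(\psi_t(f(x))),\,f^{-1}(\psi_{h(t)}(f(y)))\big)\le\delta,\qquad\forall t\in\mathbb{R}.
$$
Now invoke (\ref{paja}): for the pulled-back flow $\hat{f}(\psi)$ there are homeomorphisms $h_x,h_y\in\mathcal{B}$ with $f^{-1}(\psi_s(f(x)))=\phi_{h_x(s)}(x)$ and $f^{-1}(\psi_s(f(y)))=\phi_{h_y(s)}(y)$ for all $s$. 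Substituting $s=t$ into the first factor and $s=h(t)$ into the second turns the displayed inequality into
$$
d\big(\phi_{h_x(t)}(x),\,\phi_{h_y(h(t))}(y)\big)\le\delta,\qquad\forall t\in\mathbb{R}.
$$

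Finally I reparametrize time. Because $h_x$ is a homeomorphism of $\mathbb{R}$ fixing $0$, replacing $t$ by $h_x^{-1}(t)$ leaves the quantifier over $t$ unchanged and yields
$$
d\big(\phi_t(x),\,\phi_{(h_y\circ h\circ h_x^{-1})(t)}(y)\big)\le\delta,\qquad\forall t\in\mathbb{R}.
$$
Setting $g=h_y\circ h\circ h_x^{-1}$, this map is continuous as a composition of continuous maps and satisfies $g(0)=h_y(h(h_x^{-1}(0)))=0$, since $h_x^{-1}(0)=0$, $h(0)=0$ and $h_y(0)=0$; hence $g\in\mathcal{B}$. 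Therefore $y\in\Gamma_{\delta,\phi}(x)$, that is $f^{-1}(w)\in\Gamma_{\delta,\phi}(f^{-1}(z))$, and as $w\in\Gamma_{\alpha,\psi}(z)$ was arbitrary this is the desired inclusion.

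I expect the only genuine subtlety to be the bookkeeping of the basepoint-dependent time changes $h_x,h_y$ and the verification that their composition with $h$ remains in $\mathcal{B}$. This is exactly where the part of (\ref{paja}) asserting that the $h_x$ are \emph{homeomorphisms} is essential, since it guarantees that $h_x^{-1}$ exists and again fixes the origin; the need for this step reflects the fact that $f$ is merely an orbit equivalence rather than a conjugacy.
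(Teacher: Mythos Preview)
Your proof is correct and follows essentially the same approach as the paper's: choose $\alpha$ by uniform continuity of $f^{-1}$, transport the $\psi$-inequality through $f^{-1}$, convert to the $\phi$-flow via the basepoint-dependent reparametrizations $h_x,h_y$ from (\ref{paja}), and then absorb $h_x$ by substituting $t\mapsto h_x^{-1}(t)$ to obtain a new element of $\mathcal{B}$. Your added verification that $g(0)=0$ and the remark on why $h_x$ being a homeomorphism matters are welcome clarifications of points the paper leaves implicit.
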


\begin{proof}
Fix $\delta>0$.
By compactness we have that $f^{-1}$ is uniformly continuous, so,
there is $\alpha>0$ such that
$d(f^{-1}(z), f^{-1}(w))\leq\delta$ whenever $z,w\in Y$ satisfy $d(z,w)\leq\alpha$.

Now, take $z,w\in Y$ with $w\in \Gamma_{\alpha,\psi}(z)$, i.e., there is $h\in \mathcal{B}$ such that
$$
d(\psi_t(z),\psi_{h(t)}(w))\leq\alpha,
\quad\quad\forall t\in \mathbb{R}.
$$
Therefore, the choice of $\alpha$ implies
$$
d(f^{-1}(\psi_t(z)),f^{-1}(\psi_{h(t)}(w)))\leq\delta,
\quad\quad\forall t\in \mathbb{R},
$$
so,
$$
d(\hat{f}(\psi)_t(f^{-1}(z)),\hat{f}(\psi)_{h(t)}(f^{-1}(w)))\leq\delta,
\quad\quad\forall t\in \mathbb{R},
$$
and then
$$
d(\phi_{h_{f^{-1}(z)}(t)}(f^{-1}(z)),\phi_{(h_{f^{-1}(w)}\circ h)(t)}(f^{-1}(w)))\leq\delta,
\quad\quad\forall t\in \mathbb{R}
$$
by (\ref{paja}).
Since $h_{f^{-1}(z)}$ is a homeomorphism we can replace $t$ by $h_{f^{-1}(z)}^{-1}(t)$ in the expression above to obtain
$$
d(\phi_t(f^{-1}(z)),\phi_{\hat{h}(t)}(f^{-1}(w)))\leq\delta,
\quad\quad\forall t\in \mathbb{R},
$$
where $\hat{h}=h_{f^{-1}(w)}\circ h\circ h_{f^{-1}(z)}^{-1}$. As clearly $\hat{h}\in\mathcal{B}$ we conclude that
$f^{-1}(w)\in \Gamma_{\delta,\phi}(f^{-1}(z))$. As $z\in Y$ and $w\in \Gamma_{\alpha,\psi}(z)$ are arbitrary we conclude that
$$
f^{-1}(\Gamma_{\alpha,\psi}(z))\subset \Gamma_{\delta,\phi}(f^{-1}(z)), \quad\quad\forall z\in Y.
$$
\end{proof}

The lemma below is contained in the proof of Theorem 6 of \cite{bw}.
We include its proof here for the sake of completeness.

\begin{lemma}
\label{suspension1}
If $f: X\to X$ is a homeomorphism of a compact metric space $X$, then
$$
\Gamma_{\delta,\phi^{1,f}}(y)\subset \hat{\Gamma}_\delta^f(x)\times [0,1],
\quad\quad\forall y=(x,t)\in Y^{1,f}\mbox{ and }0<\delta<\frac{1}{4},
$$
where $\hat{\Gamma}^f_\delta(x)$ above denotes the dynamic ball in Definition \ref{ar-mor}
with respect to the metric $d'(x_1,x_2)=\min\{d(x_1,x_2),d(f(x_1),f(x_2))\} $ on $X$.
\end{lemma}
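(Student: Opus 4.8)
The plan is to recover the discrete dynamic ball of $f$ from the instants at which the suspension orbit crosses the base $X\times\{0\}$. Fix $y=(x,t)\in Y^{1,f}$ with $0\le t\le 1$, let $0<\delta<\frac{1}{4}$, and take $w\in\Gamma_{\delta,\phi^{1,f}}(y)$, writing $w=(x',t')$ with $0\le t'\le 1$. Since every point of $Y^{1,f}$ has such a representative, the factor $[0,1]$ in the conclusion is automatic, so it suffices to prove $x'\in\hat{\Gamma}^f_\delta(x)$ for the metric $d'$, i.e.
\[
d'(f^n(x),f^n(x'))\le\delta,\qquad\forall n\in\mathbb{Z}.
\]
By the definition of $\Gamma$ there is $h\in\mathcal{B}$ with $d^{1,f}(\phi^{1,f}_s(y),\phi^{1,f}_{h(s)}(w))\le\delta$ for every $s\in\mathbb{R}$.

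First I would locate the base crossings of the orbit of $y$. Since $\phi^{1,f}_s(x,t)=(x,t+s)$ modulo the identification $(x,1)=(f(x),0)$, the orbit of $y$ meets the base exactly at the times $\tau_n:=n-t$, where $\phi^{1,f}_{\tau_n}(y)=(f^n(x),0)$ for every $n\in\mathbb{Z}$. Evaluating the expansivity inequality at $s=\tau_n$ then gives
\[
d^{1,f}\bigl((f^n(x),0),\,\phi^{1,f}_{h(\tau_n)}(w)\bigr)\le\delta,\qquad\forall n\in\mathbb{Z}.
\]

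Next I would exploit the local structure of the Bowen--Walters metric near the base, as already worked out in the proof of Theorem 6 of \cite{bw}. Because $\delta<\frac{1}{4}$, any point lying within $d^{1,f}$-distance $\delta$ of a base point $(a,0)$ must sit in a thin collar of the base, admitting a representative $(b,r)$ with $r\in[0,\delta]\cup[1-\delta,1]$, that is, either just above the base or just below the seam $(b,1)=(f(b),0)$. Writing $\phi^{1,f}_{h(\tau_n)}(w)=(f^{k_n}(x'),\theta_n)$ for the appropriate base index $k_n\in\mathbb{Z}$ and $\theta_n\in[0,1)$, the two possibilities (approach from above, controlled by $d(f^n(x),\cdot)$, and approach from below the seam, controlled by $d(f(f^n(x)),f(\cdot))$) are precisely the two entries of the minimum defining $d'$. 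Hence the collar estimate yields
\[
d'(f^n(x),f^{k_n}(x'))\le\delta,\qquad\forall n\in\mathbb{Z},
\]
with the $\min$ in $d'$ absorbing the two-sided approach to the seam.

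The remaining, and main, step is to show that the base indices of the two orbits agree, i.e. $k_n=n$. For this I would lift the height coordinate along both orbits to continuous real functions $H_y(s)=t+s$ and $H_w(s)$ with $H_w(0)=t'$, so that $\lfloor H_w(s)\rfloor$ records the base index of $\phi^{1,f}_{h(s)}(w)$. Continuity of $h$ together with $\delta<\frac{1}{4}$ keeps the vertical gap between the lifts from ever reaching $\frac{1}{2}$, so that $|H_y(s)-H_w(s)|\le\delta$ for every $s$ once it holds at $s=0$ (where $|t-t'|\le\delta$); evaluating at $s=\tau_n$ then forces $\lfloor H_w(\tau_n)\rfloor=n$, the seam ambiguity having already been absorbed into $d'$. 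Substituting $k_n=n$ into the displayed inequality gives $d'(f^n(x),f^n(x'))\le\delta$ for all $n\in\mathbb{Z}$, which is exactly $x'\in\hat{\Gamma}^f_\delta(x)$, completing the proof. The delicate point throughout is the passage between the quotient metric $d^{1,f}$ near the seam and the base metric $d'$, which is where the hypothesis $\delta<\frac{1}{4}$ is indispensable; all of this is implicit in \cite{bw}, and I would merely make the collar and lifting estimates explicit.
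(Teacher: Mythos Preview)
Your approach is workable but differs from the paper's. Instead of evaluating at the base-crossing times $\tau_n=n-t$, the paper first reduces to the special case $t=\tfrac12$ by flowing $y$ for a time $r$ with $|r|\le\tfrac12$, replacing $y_0$ by $\phi^{1,f}_{h(r)}(y_0)$ and $h$ by $\hat h(s)=h(s+r)-h(r)$. Once $t=\tfrac12$, the paper evaluates at integer times $s=n$, where $\phi^{1,f}_n(y)=(f^n(x),\tfrac12)$ sits at the midpoint of the fiber; since $\delta<\tfrac14$, the companion point must also have height in $(\tfrac14,\tfrac34)$, so its base coordinate is unambiguously $f^n(x_0)$ and $d'(f^n(x),f^n(x_0))\le d^{1,f}$ follows directly. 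The point of the reduction is precisely to stay away from the seam, so that no lifting is needed at all.

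Your route lands exactly on the seam and therefore requires the lifting argument, and here there is an imprecision. The initialization ``$|t-t'|\le\delta$ at $s=0$'' can fail: take $t$ near $0$ and $t'$ near $1$, with a short Bowen--Walters path through the identification $(x',1)\sim(f(x'),0)$. What the metric actually gives is only that the continuous function $H_y-H_w$ remains within $\delta$ of some fixed integer $k\in\{-1,0,1\}$, and your collar estimate then yields $d'(f^n(x),f^{n+k}(x'))\le\delta$ for all $n$. Pinning $k=0$ is exactly the content of your claim ``$k_n=n$'', and it is not yet justified; one way to supply it is to first flow both points to a height bounded away from $\{0,1\}$ before anchoring the lift, which is essentially the paper's reduction. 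As written, this step is the one genuine gap in your argument.
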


\begin{proof}
Fix $0<\delta<\frac{1}{4}$, $y=(x,t)$ and $y_0=(x_0,t_0)$ in $Y^{1,f}$
with $y_0\in \Gamma_{\delta,\phi^{1,f}}(y)$.
Then, there is $h\in \mathcal{B}$ such that
$d^{1,f}(\phi^{1,f}_t(y),\phi^{1,f}_{h(t)}(y_0))\leq \delta$ for every $t\in \mathbb{R}$.

We divide the proof that $y_0\in \hat{\Gamma}_\delta(x)\times [0,1]$
in two cases:

\noindent
{\em Case 1: $t=\frac{1}{2}$}.

As $\phi^{1,f}_1(y)=(x,\frac{3}{2})$, $\phi^{1,f}_{h(1)}(y_0)=(x_0,t_0+h(1))$ and
$d^{1,f}(\phi_1^{1,f}(y),\phi_{h(1)}^{1,f}(y_0))\leq\delta$ one has
$$
\left|
\frac{3}{2}-t_0-h(1)\right|\leq \delta.
$$
As $0<\delta<\frac{1}{4}$ we get
$0\leq t_0+h(1)\leq 1$ thus
$\phi^{1,f}_{h(1)}(y_0)=(f(x_0),t_1)$ for some $0\leq t_1\leq 1$.
As $\phi_1^{1,f}(y)=\left(f(x),\frac{1}{2}\right)$ we obtain
$d'(f(x),f(x_0))\leq d^{1,f}(\phi_1^{1,f}(y),\phi^{1,f}(y_0))\leq\delta$
yielding
$d'(f(x),f(x_0))\leq\delta.$
Analogously we obtain $d'(f^n(x),f^n(x_0))\leq\delta$ for all $n\in\mathbb{Z}$ thus
$x_0\in \hat{\Gamma}_\delta^f(x)$.
As $0\leq t_0\leq 1$ by the definition of $Y^{1,f}$ we obtain
$y_0\in \hat{\Gamma}_\delta(x)\times [0,1]$.

\vspace{5pt}

\noindent
{\em Case 2: $t\neq\frac{1}{2}$}.

Clearly there is a number $r$ with $|r|\leq\frac{1}{2}$ such that
$\phi_r^{1,f}(y)=\left(x,\frac{1}{2}\right)$.
Setting $y'=\phi_r^{1,f}(y)$ and $y_0'=\phi_{h(r)}^{1,f}(y_0)$
we have
$$
d^{1,f}(\phi_t^{1,f}(y'),\phi_{\hat{h}(t)}(y_0'))\leq\delta,
\quad\quad\forall t\in \mathbb{R},
$$
where
$\hat{h}(t)=h(t+r)-h(r)$.
Moreover,
$y_0'=(x_0,t_0+h(r))$ so
$\left|\frac{1}{2}-t_0-h(r)\right|\leq d^{1,f}(y',y_0')\leq\delta<\frac{1}{4}$ thus
$\frac{1}{4}\leq t_0+h(r)\leq \frac{3}{4}$.
Since $\hat{h}\in\mathcal{B}$ and
$y'=\left(x,\frac{1}{2}\right)$, Case 1 yields
$x_0\in \hat{\Gamma}_\delta^f(x)$ so
$y_0\in \hat{\Gamma}_\delta(x)\times [0,1]$.
\end{proof}

\section{Proofs}

\begin{proof}[Proof of Theorem \ref{A2}]
The necessity is clear so we only need to prove the sufficiency.
For this, we let $\alpha>0$ as in the statement and
$\delta$ be as in Corollary \ref{c1} for this $\alpha$.
Suppose by contradiction  that $\mu$ is not expansive.
Then, there is $x_0\in X$ and a Borel set $B\subset \Gamma_\delta(x_0)$ such that $\mu(B)>0$.
Since $\mu(B)>0$ we have that there is $y_0\in B$ such that
$\mu(\Gamma_\alpha(y_0))=0$.
But $y_0\in B\subset \Gamma_\delta(x_0)$ so $y_0\in \Gamma_\delta(x_0)$ thus
$\Gamma_\delta(x_0)\subset \Gamma_\alpha(y_0)$ by Corollary \ref{c1}.
As $B\subset \Gamma_\delta(x_0)$ we obtain $B\subset \Gamma_\alpha(y_0)$
so $\mu(B)=0$ since $\mu(\Gamma_\alpha(y_0))=0$, a contradiction which proves the result.
\end{proof}

\begin{proof}[Proof of Theorem \ref{general}]
(A1) follows directly from Lemma \ref{sing-supp}.

To prove (A2) let $\phi$ be a continuous flow of a compact metric space.
Clearly
$Per(\phi)=\bigcup_{N\in\mathbb{N}} Per_N(\phi)$. Since
$\mu(Per_N(\phi))=0$ for every $N\in\mathbb{N}$ and every expansive measure $\mu$
by Lemma \ref{p1}, we obtain
the result.

To prove (A3) we only need to prove the necessity as the sufficiency follows analogously
replacing $f$ by $f^{-1}$.
Let $\mu$ an expansive measure of $\phi$ and $\alpha$ be as in Lemma \ref{mane} for an expansivity constant $\delta$ of $\mu$.
Fix $z\in Y$ and a Borel set $B\subset \Gamma_{\alpha,\psi}(z)$.
Applying Lemma \ref{mane} we obtain $f^{-1}(B)\subset \Gamma_{\delta,\phi}(f^{-1}(z))$ so $\mu(f^{-1}(B))=0$ since
$\mu(\Gamma_{\delta,\phi}(f^{-1}(z)))=0$.
It follows that $f_*\mu(B)=\mu(f^{-1}(B))=0$ whence $f_*\mu(\Gamma_{\alpha,\psi}(z))=0$, $\forall z\in Y$. We conclude that
$f_*\mu$ is expansive for $\psi$ with expansivity constant $\alpha$.

Finally, (A4) is a consequence of Lemma \ref{l1} whereas (A5) follows from
the characterization of expansive flows (\ref{eq1}).
\end{proof}

\begin{proof}[Proof of Theorem \ref{general2}]
Fix $T\in \mathbb{R}\setminus\{0\}$ and let $\mu$ be an expansive measure of $\phi$ with expansivity constant $\delta$.
Taking $\alpha$ as in Lemma \ref{lele} for such $\delta$ and $T$
we obtain
$\mu(\hat{\Gamma}_\alpha^{\phi_{T}}(x))=0$, for every $x\in X$, thus $\mu$ is expansive for $\phi_{T}$.
\end{proof}

\begin{proof}[Proof of Theorem \ref{general3}]
There is a natural equivalence $\lambda: Y^{\tau,f}\to Y^{1,f}$ between $\phi^{\tau,f}$ and $\phi^{1,f}$ given by
$$
\lambda(x,t)=\left(x,\frac{t}{\tau(x)}\right).
$$
A direct computation shows that
$$
\lambda_*\mu^{\tau,f}=\mu^{1,f}.
$$
Therefore, Theorem \ref{general}-(A3) reduces the proof to the case $\tau=1$.

First assume that $T^{1,f}(\mu)$ is expansive for $\phi^{1,f}$. Then,
$T^{1,f}(\mu)$ is also expansive for $\phi^{1,f}_1$ by Theorem \ref{general2}.
Denoting by $Id$ the identity of $\left[0,\frac{1}{2}\right]$ one has
$$
\phi^{1,f}_1(x,s)=(x,s+1)=(f^{[s+1]}(x),s+1-[s+1])=(f(x),s)=(f\times Id)(x,s)
$$
for every $x\in X$ and $0\leq s\leq\frac{1}{2}$. Therefore, $T^{1,f}(\mu)$ is also expansive for
$f\times Id: X\times \left[0,\frac{1}{2}\right]\to X\times \left[0,\frac{1}{2}\right]$.
But clearly $T^{1,f}(\mu)$ restricted to $X\times \left[0,\frac{1}{2}\right]$ is
the product $\mu\times m$ where $m$ denotes the Lebesgue measure. So,
$\mu\times m$ is expansive for $f\times Id: X\times \left[0,\frac{1}{2}\right]\to X\times \left[0,\frac{1}{2}\right]$
and, then, $\mu$ is expansive for $f$.

Conversely, assume that $\mu$ is expansive for $f$.
Define the metric $d'$ in $X$ by $d'(x_1,x_2)=\min\{d(x_1,x_2), d(f(x_1),f(x_2))\}$, $\forall x_1,x_2\in X$.
Clearly $(X,d')$ is a compact metric space.
Since compact metrics in the same space are equivalent we have that $\mu$ is also expansive
for $f$ viewed as a homeomorphism of $(X, d')$.
Then, there is $0<\delta<\frac{1}{4}$ such that
$$
\mu(\hat{\Gamma}_\delta^{f}(x))=0, \quad\quad\forall x\in X,
$$
where $\hat{\Gamma}_\delta^{f}(x)$ above denotes the dynamical ball in Definition \ref{ar-mor} but with respect to the metric $d'$.

Now, fix $y=(x,t)\in Y^{1,f}$.

By Lemma \ref{suspension1} we have
$\Gamma_\delta(y)\subset \hat{\Gamma}_\delta^{f}(x)\times [0,1]$.
Then,
every Borel set $B\subset \Gamma_\delta(y)$ satisfies
$B\subset \hat{\Gamma}_\delta^{f}(x)\times [0,1]$
thus
$
\chi_B
\leq
\chi_{\hat{\Gamma}_\delta^{f}(x)\times [0,1]}
$
where $\chi$ above stands for characteristic function.
Taking $\tau=1$ in (\ref{eq9}) we obtain
$$
T^{1,f}(\mu)(B)=\int_X\int_0^1\chi_B(\phi_t^{1,f}(z)))dtd\mu(z)\leq
\int_X\int_0^1\chi_{\hat{\Gamma}_\delta^{f}(x)\times [0,1]}(\phi_t^{1,f}(z))dtd\mu(z)
=
$$
$$
\int_X\chi_{\hat{\Gamma}_\delta^{f}(x)}(z)d\mu(z)=\mu(\hat{\Gamma}_\delta^{f}(x))=0
$$
thus $T^{1,f}(\mu)(\Gamma_\delta(y))=0$.
Since $y\in Y^{1,f}$ is arbitrary we conclude that $T^{1,f}(\mu)$ is expansive for $\phi^{1,f}$ with expansivity constant $\delta$.
\end{proof}

\begin{proof}[Proof of Theorem \ref{chino}]
Suppose by contradiction that there is a measure-expansive flow $\phi$ of a closed surface with a non-trivial recurrence $x_0$.
Applying result by Gutierrez \cite{gu} and Theorem \ref{general}-(A3) we can assume that
$\phi$ is $C^1$.
It follows that the orbits of $\phi$ are all submanifolds of non-trivial
codimension. Since all such submanifolds have measure zero with respect to the
Lebesgue measure,
we obtain that $\phi$ exhibits at least one Borel measure which both has full support
and vanishes along the orbits.
It then follows from Theorem \ref{general}-(A1) that
$Sing(\phi)=\emptyset$.

Passing to a double covering if necessary we can assume that the surface is the two-dimensional torus $T^2$.

Since $x_0$ is not periodic there is a circle $X$ transversal to $\phi$ containing $x_0$
(see Proposition 5.1.2 in \cite{hh}).
Associated to this circle we have the return time $\tau: Dom(\tau)\subset X\to (0,\infty)$ and
the return map $f: Dom(f)\subset X\to X$, $f(x)=\phi_{\tau(x)}(x)$,
where $Dom(\cdot)$ stands for the domain operator.
Observe that $X$ cannot separates $T^2$ since $\phi$ has no singularities.

Since $x_0\in X$ is recurrent we have that $Dom(\tau)$ (and so $Dom(f)$) are not empty.
If there were $x_1\in Dom(\tau)$, then we can use the fact that
$X$ does not separate $T^2$ together with the Poincar\'e-Bendixon Theorem \cite{pdm} to prove that
the positive orbit of $x_1$ spirals toward a periodic orbit parallel to $X$.
In this case we would have $Dom(\tau)=\emptyset$ which is a contradiction, so, $Dom(\tau)$ (and hence $Dom(f)$) are the whole circle $X$.
From this we conclude that $X$ intersects every orbit of $\phi$ (see for instance
\cite{nz}, p. 146 in \cite{pdm} or Lemma 3 p. 106 in \cite{p}). Clearly
there is an equivalence $R$ between $\phi$ and the suspension flow
$\phi^{\tau,f}$ over $f$ with height function $\tau$.
Now take any non-atomic Borel measure $\mu$ of $X$ and its corresponding suspension
$T^{\tau,f}(\mu)$. In addition, $R^{-1}_*(T^{\tau,f}(\mu))$ vanishes along the orbits of $\phi$ and, since $\phi$ is measure-expansive, we conclude that
$R^{-1}_*(T^{\tau,f}(\mu))$ is expansive for $\phi$.
But $R$ is an equivalence between $\phi$ and $\phi^{\tau,f}$ so $T^{\tau,f}(\mu)=R_*(R^{-1}_*(T^{\tau,f}(\mu)))$
is expansive for $\phi^{\tau,f}$ by Theorem \ref{general}-(A3).
Applying Theorem \ref{general3} we conclude that $\mu$ is expansive for $f$.
We conclude that every non-atomic Borel measure of $X$ is expansive for $f$.
But this is absurd since there are no such homeomorphisms $f$
of the circle (c.f. \cite{mmm} or \cite{ms1}). This contradiction proves the result.
\end{proof}

\end{document}